\DeclareFontFamily{OT1}{pzc}{}
\DeclareFontShape{OT1}{pzc}{m}{it}{<-> s * [1.10] pzcmi7t}{}
\DeclareMathAlphabet{\mathpzc}{OT1}{pzc}{m}{it}
\newtheorem{prop}{Proposition}[section]
\newtheorem{rem}[prop]{Remark}
\newtheorem{lem}[prop]{Lemma}
\newtheorem{theo}[prop]{Theorem}
\numberwithin{equation}{section}
\newcommand{\beq}{\begin{eqnarray}}
\newcommand{\beqq}{\begin{eqnarray*}}
\newcommand{\eeq}{\end{eqnarray}}
\newcommand{\eeqq}{\end{eqnarray*}}
\newcommand{\R}{\mathbb{R}}
\newcommand{\E}{\mathbb{E}}
\renewcommand{\P}{\mathbb{P}}
\newcommand{\Sub}{\mathpzc{k \hspace{0.02cm}}}
\newcommand{\Sb}{\mathpzc{b}}
\newcommand{\X}{\mathds{X}}
\newcommand{\Xb}{\X^{(\Sb)}}
\newcommand{\Xbs}{X^{(\Sb_{\Sub})}}
\newcommand{\bfXs}{\mathbf{X}^{\Sub}}
\newcommand{\C}{\mathbb{C}}
\newcommand{\Lt}{\ell}
\newcommand{\T}{\mathds{T}}
\newcommand{\Tm}{{T}}
\newcommand{\Rs}{\mathfrak{R}}
\newcommand{\ST}[2]{#1_a(#2)}
\newcommand{\Lev}{{X}}
\newcommand{\dk}{{\mathrm{d}}_{{\Sub}}\hspace{0.02cm}}
\newcommand{\dt}{\mathrm{d}_{\Sb}\hspace{0.02cm}}
\newcommand{\g}{\mathtt{g}}
\renewcommand{\d}{\mathtt{d}}
\renewcommand{\Xi}{\mathbb{C}_{\overline{\mathbf{\Psi}}}}
\newcommand{\Ikea}{{\Sub_{\Sb}^{\triangleright q}}}
\title{First passage times over stochastic boundaries   for subdiffusive processes} %
\author{{\sc  C. Constantinescu}\thanks{University of Liverpool, Department of Mathematical Sciences, Liverpool, L69 3 BX, UK. \ \ E-mail: 	C.Constantinescu@liverpool.ac.uk }, \ {\sc R. Loeffen}\thanks{University of Manchester, School of Mathematics,  Manchester, M13 9PL, UK. \ \ E-mail: ronnie.loeffen@manchester.ac.uk } \  {\sc and }
  {\sc P. Patie}\thanks{Cornell University, School of Operations Research and Information Engineering,
   220 Rhodes Hall, Ithaca, NY 14853, U.S.A. \ \ E-mail: ppatie@cornell.edu }}
   \date{}
\begin{document}
\maketitle
\begin{abstract}
Let $\X=(\X_t)_{t\geq 0}$ be the subdiffusive  process defined,  for any $t\geq 0$, by $ \X_t = \Lev_{\Lt_t}$ where  $\Lev=(\Lev_t)_{t\geq 0}$ is a L\'evy process and $\Lt_t=\inf \{s>0;\: \Sub_s>t \}$ with  $\Sub=(\Sub_t)_{t\geq 0}$  a subordinator independent of $\Lev$.  We start by developing  a composite Wiener-Hopf factorization to characterize  the  law  of  the pair $(\T_a^{(\Sb)}, (\X - \Sb)_{\T_a^{(\Sb)}})$ where
 \begin{equation*}%
\T_a^{(\Sb)} = \inf \{t>0;\: \X_t > a+ \Sb_t  \}
\end{equation*}
with $a \in \R$ and $\Sb=(\Sb_t)_{t\geq 0}$  a (possibly degenerate) subordinator independent of $\Lev$ and $\Sub$.
We proceed by providing a detailed analysis of the cases where either $\Sub$ is a stable subordinator or $X$ is spectrally negative.
Our proofs hinge on a variety of techniques including excursion theory, change of measure, asymptotic analysis and on establishing a link between subdiffusive processes and a subclass of semi-regenerative processes. In particular, we show that the variable $\T_a^{(\Sb)}$ has the same law as the first passage time of a  \emph{semi-regenerative process of L\'evy type}, a terminology that we introduce to  mean that this process satisfies the  Markov property of L\'evy processes for stopping times whose graph is included in the associated regeneration set.
\end{abstract}

\textbf{AMS 2010 subject classifications:} Primary: 60K15, 60G40. Secondary: 60G51, 60G52,  60G18.
$\vspace{5pt}$
\\
\textbf{Key words:} First passage time problems; subdiffusive diffusions; Wiener-Hopf factorization; L\'evy processes, time-changed; inverse subordinator;  semi-regenerative processes; long-range dependence; ruin probability; stable processes.

\section{Introduction and main results}\label{sec_intro}

The recent years have witnessed strong and steady  interests in the analysis of subdiffusive  dynamics constructed by time-changing a Brownian motion, a Poisson process or  any  L\'evy process, by the inverse of an independent subordinator. On the one hand, this seems to be attributed  to the appearance of such dynamics in some important limit theorems such as the scaling limit of continuous-time random walks (in which the i.i.d.~jumps are
separated by i.i.d.~waiting times) \cite{M2},  the scaling limit of random walks in random environment \cite{Ben},  and also the (surprising) intermediate time behaviour of some periodic diffusive flows  which gives rise to the fractional kinetic process \cite{Hairer}. Moreover, in functional analysis, these processes appear in the stochastic representation of solutions of fractional Cauchy problems defined similarly to the classical Cauchy problem by replacing the time derivative by the fractional one or more generally by some convolution type operators \cite{Baeu}.   On the other hand, the statistical properties of these models, e.g.~subdiffusive behavior, long-range dependence, fractal properties, see \cite{Schil-Cor},  enable to reproduce some complex phenomena that have been observed in physical sciences  such as in statistical physics \cite{Pyr}, chemical physics \cite{ChemPhys}, see also the paper \cite{Review} for an excellent description of the ubiquity of anomalous transport in nature  and, also in economy, see \cite{Leo-Fin}. %

One generic and important  theme of research with various applications in this area, and more generally for non-Markovian dynamics, is  the first passage time problem. There is indeed a rich and substantial literature devoted to the study and applications of this problem in the context of  Gaussian processes,  semi-Markov  processes,   and   some self-similar non-Markovian  processes, see for instance \cite{Dembo, Nualart}, \cite{Toaldo-FPT, Deng, Guo}, \cite{LPS} and the references therein.  However, unlike for Markov processes, this literature reveals that the non-Markovian property  make the analysis of such objects very difficult  and,  in general,  only  very partial statistical information regarding these random variables has been obtained. An interesting feature worth mentioning  is the  phenomenon of persistency  that has been observed   for some Gaussian processes and for self-similar non-Markovian processes, meaning  that the survival probabilities of the first passage time distribution has a power decay which is independent of the state variable, see
 \cite{Dembo, Toaldo-FPT, LPS} and the references therein. We already point out that, as a by-product of our results, we shall also identify, in the case where the subordinator driving the time change is a stable subordinator,  a refinement of the persistence phenomena for the distribution of the first passage time for subdiffusive processes, see Proposition \ref{cor:ident}.

The main objective of this paper is to offer a fresh perspective on this issue by  establishing a general theory for  the first passage time problem over a stochastic boundary, not only a fixed one, of the  class of subdiffusive processes that we now introduce. Throughout,
we consider the  stochastic process $\X=(\X_t)_{t\geq 0}$  defined on the probability space $(\Omega,\mathcal F,\P)$ %
by
\beq\label{def:reglevy}
\X_t = \Lev_{\Lt_t}, \quad t\geq 0,
\eeq
where
$\Lt_t=\inf \{s>0;\: \Sub_s>t \}$ with  $\Sub=(\Sub_t)_{t\geq 0}$  a subordinator, issued from $0$, associated with the Bernstein function $\phi_{\Sub}$ and  $\Lev=(\Lev_t)_{t\geq 0}$ is an independent  L\'evy process with characteristic exponent $\Psi$, all these notion will be reviewed below.
Our first main contribution  is  to provide, by means of a composite Wiener-Hopf factorization, an explicit characterization of  the joint law of $(\T_a^{(\Sb)}, (\X - \Sb)_{\T_a^{(\Sb)}})$, where  $\T_a^{(\Sb)}$ is the first passage time of $\X$ to the stochastic boundary $a+ \Sb$, defined as
\beq\label{defT}
\T_a^{(\Sb)} = \inf \{t>0;\: \X_t > a+ \Sb_t  \}
\eeq
with $a \in \R$ and $\Sb=(\Sb_t)_{t\geq 0}$ is another subordinator, associated to the  Bernstein function  $\phi_{\Sb}$,  defined on $(\Omega,\mathcal F,\mathbb P)$ and assumed to be independent of   $\Lev$ and $\Sub$, and thus of $\X$.
We shall also give more insights into the fractional subdiffusive class, that is when $\Sub$ is a stable subordinator, and also into the case where $\X$ does not have positive jumps.

To the best of our knowledge, up to now only in very few  isolated cases, explicit expressions have been found for the Laplace transform or simply about the mass at infinity of $\T_a^{(\Sb)}$ and they fall under two categories. One where $\Sub$ is a stable subordinator, $\Lev$ is a Brownian motion plus drift or a stable process   and $\Sb=0$, see \cite{Deng, Guo, LPS}. The second category corresponds to $\Lev$ being a compound Poisson as then $\X$ becomes a compound renewal process and the first passage times over in particular affine barriers for the latter class of processes have been well studied in the literature, see \cite{AA} for an overview. Our results, which  allow for explicit expressions of several quantities of interest for the  first passage time problem of general subdiffusive processes,   thus provide a vast improvement of the existing literature. Our paper  also  strengthens  their tractability and hence their applicability as models for complex phenomena.

  We also point out that the stochastic boundary $a+ \Sb_t$ boils down to an affine curve, with a positive slope, when $\Sb_t = \dt t, \dt>0,$ is a degenerate pure drift subordinator, that is
\begin{equation*}
\T_a^{(\dt)} = \inf \{t>0;\: \X_t > a+ \dt t  \}
\end{equation*}
where $a\in\R$. We highlight this case since it corresponds to a generalization of the  models that have been used recently in risk theory,  where the parameter $\dt$ corresponds  to the constant risk premium, see \cite{Biard-Saussereau-2014,rockpaper} and the references therein. Note that the stochastic boundary case is  also of interest in this context, see the discussion of the example in Section \ref{sec:cox}.
We mention that our analysis covers also the dual first passage time
\begin{equation}\label{def:Ta}
\widehat{\T}_a^{(\Sb)}  = \inf \{t>0;\: \X_t < -a - \Sb_t  \}
\end{equation}
 by observing that $\widehat{\T}_a^{(\Sb)} = \inf \{t>0;\: -\X_t > a+ \Sb_t  \}$ and $-\X$ is simply the time change still by $\Lt$ of the dual L\'evy process $\widehat \Lev = -\Lev$ which is another L\'evy process. Finally anticipating the discussion shortly after, it  follows from  the definition of $\X$~and the spatial homogeneity of the L\'evy process $\Lev$ that for any $x \in \R$, the law of  $(\X_t)_{t\geq 0}$ under $\P_x$ (i.e.~$\P_x(X_0=x)=1$) is the law of $(\X_t+x)_{t\geq 0}$ under $\P=\P_0$. In particular, $\T_a^{(\Sb)}$ under $\P_x$ has the same law as $\T_{a-x}^{(\Sb)}$ under $\P$.

Let us now recall that $\Sub$ and $\Lev$ as L\'evy processes are stochastic processes with stationary and independent increments, with a.s.~c\`adl\`ag sample paths and their law are fully characterized by their Laplace exponent $\phi_{\Sub}(u)= -\log \E[e^{-u \Sub_1}], u\geq 0$, and characteristic function, $\Psi(z)  =\log \E[e^{iz \Lev_1}],z \in \R$, that take respectively the form
\beq\label{LKs}
\phi_{\Sub}(u)=  {\dk}u   + \int_{0}^{\infty} (1 - e^{-uy}) \mu(dy), \quad u\geq0,
\eeq
where $\dk \geq 0$ and $\mu$ is a L\'evy measure such that $\int_{0}^{\infty} (1\wedge y) \mu(dy)<+\infty$ and
\beq\label{LK}
\Psi(z)= -\frac{\sigma^2}{2} z^2 +i \mathrm{d}_X \hspace{0.01cm} z + \int_{\R}(e^{i z y} -1-iy z \mathbb{I}_{\{|y|<1\}})\Pi(dy),
\eeq
in which $\sigma^2\geq 0$, $\mathrm{d}_X \in\mathbb{R}$, the coefficient of the drift part and $\Pi$ is the L\'{e}vy measure that characterizes the jumps and satisfies the condition $\int_{\R}(1\wedge |y|^{2})\Pi(dy)<+\infty$ and $\Pi(\{0\})=0$. To avoid having to treat a less interesting case separately, we assume throughout the paper  that $\Sub$  is not a compound Poisson process that is
\begin{equation}\label{eq:ass}
 \textrm{ ${\dk}>0$  or $\mu(\mathbb R^+)=\infty$ in \eqref{LKs}.}
 \end{equation}
This entails that  $\Sub$ has a.s.~increasing, not just non-decreasing, sample paths and thus  the trajectories of $\Lt$ are a.s.~non-decreasing, continuous and when $\dk=0$ they are also singular with respect to the Lebesgue measure as  the closure of the range of $\Sub$ has zero Lebesgue
measure in this case. Note also that intervals on which $\Lt$ is constant correspond to intervals that $\Sub$ jumps over, assuming that $\Sub$ is not of course a  pure drift. Thus, we have,  for  any $t>0$ and  small $h>0$, that the value of $\ell_{t+h}$  clearly depends on whether $t$ is in an interval on which $\ell$ is constant or not. Since the latter information is known when given the history of the process $\Lt$ up to time $t$ but not when given just the value of $\Lt_t$, it follows that $\Lt$ is not a Markov process. Since $\Lev$ and $\Lt$ are independent it follows that $\X$ is also not Markov, unless $\Lev=0$ or $\Sub$ is a pure drift.
It turns out that the subdiffusive processes we consider in this paper are connected to some substantial classes of non-Markovian processes that have been studied in the literature, namely the semi-regenerative and Cox and renewal processes. In view of the importance of subdiffusive processes, we believe that it is worth mentioning them in the sequel and we emphasize that the connection with semi-regenerative processes will be essential  in the proof of our results.

\bigskip

\noindent \textbf{Subdiffusive processes as semi-regenerative processes of L\'evy type.}
The process $\X$ is not Markov and so certainly is not Markov at any stopping time. However,  given a suitable filtration, we show in the following that the  Markov property still holds for stopping times $T$ that take values in the random set
\begin{equation}\label{regen_set}
\Rs=\{t\geq 0; \:\Sub_{\Lt_t}=t\}.
\end{equation}
Before stating this result we introduce some notation. Throughout, we denote   by  $(\mathcal F_t)_{t\geq 0}$ the natural filtration of the bivariate L\'evy process $(X,\Sub)$ and by  $\mathcal N=\{A\in\mathcal F; \: \mathbb P(A)=0\}$ the collection of null-events, and for all $t\geq 0$, by $\mathcal F^\P_t = \sigma(\mathcal F_t\cup \mathcal N)$ the smallest $\sigma$-algebra containing $\mathcal F_t\cup \mathcal N$ and write $\widetilde{\mathcal{F}}_t =\mathcal F^\P_{\ell_t} $.   We shall also need the notion of regeneration sets and semi-regenerative processes, whose formal definition in the canonical setting can be found in Maisonneuve \cite[Chap.~2]{Maison}. For our purpose, we consider, within the class of  regeneration sets,  the  right-closed random subsets of $[0,\infty)$ that have the same law as  the range of a subordinator. Intuitively, a semi-regenerative process regenerates at every stopping times that belong to its associated regenerative set, albeit possibly from a different starting position, which is why in the literature it is called semi-regenerative rather than regenerative.   %
Finally, we recall that  for a filtration $(\mathcal G_t)_{t\geq 0}$ on some measurable space $(\Omega,\mathcal G)$ and a $(\mathcal G_t)_{t\geq 0}$-stopping $T$ the $\sigma$-algebra $\mathcal G_T$ is defined by
$\mathcal G_T = \{A\in\mathcal G;\: A\cap\{T\leq t\}\in \mathcal G_t \ \text{for all $t\geq 0$}\}  $.

\begin{prop}\label{prop:regenerative}
 The filtration $(\widetilde{\mathcal{F}}_t)_{t\geq 0}$ is  well-defined,  right-continuous and $\X$ is adapted to it. Furthermore, for any $(\widetilde{\mathcal{F}}_t)_{t\geq 0}$-stopping time $T$  such that its graph  $[T]=\{(t,\omega); \: t=T(\omega)<\infty\} \subseteq \Rs$ a.s.~and satisfying $\mathbb P(T<\infty)>0$, we have that, under    $\mathbb P(\cdot|T<\infty)$,   the process
\begin{equation}	 \label{eq:SMPL}
(\X_{T+t}-\X_T)_{t\geq 0}  \textrm{ is independent of } \widetilde{\mathcal{F}}_T \textrm{ and has the same law as } \X \textrm{ under } \mathbb P.
\end{equation}
 In particular, $\X$ is a semi-regenerative process associated to the regeneration set $\mathfrak R$, that is  for any $T$  an $(\widetilde{\mathcal{F}}_t)_{t\geq 0}$-stopping time  such that  $[T] \subseteq \Rs$,   $G$ a positive and $\mathcal{F}^0=\sigma(\X_t, t\geq 0)$-measurable function %
then, for any $t\geq0$, $
\E\left[ G(\X_{t+T}) | \widetilde{\mathcal{F}}_T\right]= \E_{\mathds X_{T}}\left[ G(\X_{t})\right] \: \P\textrm{-a.s.~on } \{T<\infty\}$.
\end{prop}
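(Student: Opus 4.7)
The plan is to deduce the semi-regeneration of $\X$ at $T$ from the ordinary strong Markov property of the bivariate L\'evy process $(\Lev,\Sub)$ applied at the time-changed stopping time $\tau:=\Lt_T$, the crucial point being that on $\{[T]\subseteq\Rs\}$ one has $\Sub_\tau=T$ by the very definition of $\Rs$ in~\eqref{regen_set}. Before that, I would dispose of the filtration preliminaries. Since $\Sub$ is c\`adl\`ag and non-decreasing, $\Lt_t$ is an $(\mathcal F^\P_t)$-stopping time, which makes $\widetilde{\mathcal F}_t=\mathcal F^\P_{\Lt_t}$ well-defined and simultaneously yields adaptedness of $\X$ via $\X_t=\Lev_{\Lt_t}\in\mathcal F^\P_{\Lt_t}=\widetilde{\mathcal F}_t$. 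Right-continuity of $(\widetilde{\mathcal F}_t)$ then reduces to the continuity of $\Lt$---itself a consequence of the standing assumption~\eqref{eq:ass} that forces $\Sub$ to have strictly increasing paths---combined with the well-known right-continuity of the completed natural filtration of the L\'evy process $(\Lev,\Sub)$.

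For the core step, I would first check that $\tau=\Lt_T$ is an $(\mathcal F^\P_t)$-stopping time and that $\widetilde{\mathcal F}_T=\mathcal F^\P_\tau$. For the former, starting from an $(\widetilde{\mathcal F}_t)$-stopping time $T$, the decomposition
\begin{equation*}
\{\Lt_T<u\}=\bigcup_{q\in\mathbb Q_+}\{T\leq q\}\cap\{\Sub_{u-}>q\},
\end{equation*}
together with the inclusion $\{\Sub_{u-}>q\}\subseteq\{\Lt_q<u\}$ and the very definition of $\mathcal F^\P_{\Lt_q}$, gives $\{\Lt_T\leq s\}\in\mathcal F^\P_s$ after using right-continuity of $(\mathcal F^\P_t)$; the identity $\widetilde{\mathcal F}_T=\mathcal F^\P_\tau$ then follows by a parallel event-by-event argument, standard in time-change theory. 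On $\{T<\infty\}\cap\{[T]\subseteq\Rs\}$, the definition of $\Rs$ yields $\Sub_\tau=T$, and hence the pathwise identity
\begin{equation*}
\Lt_{T+t}=\tau+\Lt'_t,\qquad \Lt'_t:=\inf\{s\geq 0;\ \Sub_{\tau+s}-\Sub_\tau>t\},
\end{equation*}
since for $s\geq\tau$ the event $\{\Sub_s>T+t\}$ coincides with $\{\Sub_s-\Sub_\tau>t\}$, while for $s<\tau$ monotonicity forces $\Sub_s\leq T<T+t$. Setting $\Lev'_s:=\Lev_{\tau+s}-\Lev_\tau$ and $\Sub'_s:=\Sub_{\tau+s}-\Sub_\tau$, I read off
\begin{equation*}
\X_{T+t}-\X_T=\Lev_{\tau+\Lt'_t}-\Lev_\tau=\Lev'_{\Lt'_t}=:\X'_t,
\end{equation*}
where $\X'$ is constructed from $(\Lev',\Sub')$ exactly as $\X$ is from $(\Lev,\Sub)$. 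The strong Markov property of the bivariate L\'evy process $(\Lev,\Sub)$ at $\tau$ then gives, under $\mathbb P(\cdot|T<\infty)$, that $(\Lev',\Sub')$ is independent of $\mathcal F^\P_\tau=\widetilde{\mathcal F}_T$ and distributed as $(\Lev,\Sub)$, whence the same holds for $\X'$ versus $\X$, proving \eqref{eq:SMPL}.

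The final semi-regenerative formula then follows by writing $G$ evaluated on the shifted process as a function of $\X_T$ (which is $\widetilde{\mathcal F}_T$-measurable) and of $(\X_{T+\cdot}-\X_T)$ (independent of $\widetilde{\mathcal F}_T$ and distributed as $\X$), invoking the spatial homogeneity of $\X$ inherited from $\Lev$. I expect the main technical obstacle to be the rigorous identification $\widetilde{\mathcal F}_T=\mathcal F^\P_\tau$ for general $(\widetilde{\mathcal F}_t)$-stopping times $T$, which requires careful $\sigma$-algebra bookkeeping under the continuous time change $\Lt$; once that is in hand, the remainder is a clean application of the strong Markov property of the bivariate L\'evy process $(\Lev,\Sub)$.
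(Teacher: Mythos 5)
Your proposal is correct and follows essentially the same route as the paper: both reduce \eqref{eq:SMPL} to the strong Markov property of the bivariate L\'evy process $(\Lev,\Sub)$ applied at the $(\mathcal F^\P_t)_{t\geq 0}$-stopping time $\Lt_T$, exploiting that $\Sub_{\Lt_T}=T$ on the regeneration set $\Rs$, and then deduce the semi-regenerative formula via spatial homogeneity. The only cosmetic differences are that the paper establishes the law/independence claim through finite-dimensional distributions rather than your pathwise identity $\Lt_{T+t}=\Lt_T+\Lt'_t$, and that it proves (and only needs) the inclusion $\widetilde{\mathcal{F}}_T\subseteq\mathcal F^\P_{\Lt_T}$ rather than the equality you flag as the main technical obstacle.
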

\begin{rem}
When $\X$ is a L\'evy process, then the property \eqref{eq:SMPL} is called the strong Markov property of L\'evy processes which implies  the classical strong Markov property. The situation is similar here in the context of semi-regenerative processes and we call $\X$ a semi-regenerative process of L\'evy type. %
\end{rem}
The proof of the proposition is postponed to Section \ref{sec:proofprop1} below. Actually we shall prove this statement  in a more general setting where $\X$ in   \eqref{def:reglevy} can be defined from  a bivariate L\'evy process  $(\Lev,\Sub)$ with possibly dependent components.   This extended version of the connection with semi-regenerative processes will be essential to show, in the spectrally negative case, that the process of passage times $(\T_a^{(\Sb)})_{a\geq0}$  is, under $\P$, a subordinator, see Proposition \ref{cor:sn}.

\bigskip

\noindent \textbf{Subdiffusive Poisson processes as Cox and renewal processes.} A Cox process or a  doubly stochastic Poisson process is a Poisson process time-changed by an independent stochastic process with non-decreasing and right-continuous sample paths starting at 0, see e.g.~\cite[p.~11]{Grandell-1976}. On the other hand, a renewal process is a continuous time Markov chain starting at $0$, with jumps of size $+1$ and i.i.d.~holding times which we assume are $(0,\infty)$-valued. By a result of Kingman,  it follows that a Cox process is a renewal process if and only if the underlying time-change process is the inverse of a subordinator with increasing sample paths, see \cite[p.35]{Grandell-1976}. In this case the corresponding holding time distribution $F$ of the Cox-renewal process is characterized by
\begin{equation}\label{assump_hold}
\int_0^\infty e^{-u x} F(dx) = \frac{\lambda}{\lambda+\phi(u)}, \quad u \geq 0,
\end{equation}
where $\phi$ is the Laplace exponent of the subordinator driving the time-change and $\lambda>0$ is the intensity of the Poisson process.
Note that   Yannaros \cite[Lemma 2.1]{Yannaros-1994} showed that a sufficient condition for a distribution $F$ to be of the form \eqref{assump_hold} is that it  is absolutely continuous with a completely monotone density. However,  from  \cite[Remark 11.13]{Schilling-Song-Vondracek-2012} and the discussion above, we deduce readily this refined result.
\begin{prop}
Let us assume that $X$ is a Poisson process. Then, the subdiffusive process $\X$ defined in  \eqref{def:reglevy} is both a Cox and a  renewal process. Moreover, $F$ is of the form \eqref{assump_hold}  if it has a probability density which is log-convex and so then the renewal process with holding time distribution $F$ is of the form \eqref{def:reglevy}.
\end{prop}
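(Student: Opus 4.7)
I would prove the proposition in two parts matching the two claims.

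\medskip

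\emph{Part 1 ($\X$ is Cox and renewal, with holding-time law satisfying \eqref{assump_hold}).} The Cox property is immediate from the definition recalled from \cite[p.~11]{Grandell-1976}: $\Lev$ is Poisson and, by construction, $\Lt$ is non-decreasing, right-continuous, starts at $0$ and is independent of $\Lev$. For the renewal property I would appeal to Kingman's characterisation quoted just above: the standing assumption \eqref{eq:ass} ensures that $\Sub$ has a.s.~strictly increasing paths, so $\Lt$ is the inverse of exactly the type of subordinator appearing there. To pin down the holding-time distribution $F$, let $\tau_1\sim\mathrm{Exp}(\lambda)$ denote the first jump time of $\Lev$, which is independent of $\Sub$. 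A short sample-path analysis (using that $\Lt$ is continuous and flat on each interval $[\Sub_{s-},\Sub_s]$ associated to a jump of $\Sub$) shows that the first jump of $\X$ occurs at $\Sub_{\tau_1-}$, and this coincides a.s.~with $\Sub_{\tau_1}$ because $\tau_1$ has a continuous law independent of $\Sub$. Conditioning on $\tau_1$ then gives
\[
\E\!\left[e^{-u\Sub_{\tau_1}}\right]\;=\;\E\!\left[e^{-\tau_1\phi_{\Sub}(u)}\right]\;=\;\frac{\lambda}{\lambda+\phi_{\Sub}(u)},
\]
which is \eqref{assump_hold} with $\phi=\phi_{\Sub}$, and the subsequent inter-arrivals are i.i.d.~copies by the strong Markov property of $\Lev$ at $\tau_1$ combined with the independence of $\Sub$.

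\medskip

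\emph{Part 2 (log-convex sufficient condition).} I would invoke \cite[Remark~11.13]{Schilling-Song-Vondracek-2012}. Applied to the Laplace transform $\widehat F$ of a probability law on $(0,\infty)$ admitting a log-convex density, this remark asserts that, for a suitable $\lambda>0$, the function
\[
\phi(u)\;:=\;\lambda\,\frac{1-\widehat F(u)}{\widehat F(u)},\qquad u\ge 0,
\]
is a Bernstein function. Rearranging yields $\widehat F(u)=\lambda/(\lambda+\phi(u))$, i.e.~\eqref{assump_hold}; this is the desired refinement of Yannaros's \cite{Yannaros-1994} complete-monotonicity assumption, since log-convexity is strictly weaker than complete monotonicity. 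To display the renewal process with holding-time $F$ in the form \eqref{def:reglevy}, I would let $\Sub$ be a subordinator with Laplace exponent $\phi$ and $\Lev$ be an independent Poisson process of rate $\lambda$, and apply Part~1: the resulting $\X$ is a renewal process whose inter-arrival Laplace transform equals $\widehat F$, and uniqueness of the renewal process for a given holding-time law closes the argument.

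The only genuinely nontrivial input is \cite[Remark~11.13]{Schilling-Song-Vondracek-2012}; the rest reduces to tracking definitions, the elementary pathwise description of the first jump of $\X$, and standard independence properties of the pair $(\Lev,\Sub)$.
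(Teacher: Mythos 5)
Your proposal is correct and takes essentially the same route as the paper, which likewise gets the Cox and renewal properties plus the form \eqref{assump_hold} from Kingman's characterisation as quoted from Grandell, and settles the log-convex refinement by invoking \cite[Remark 11.13]{Schilling-Song-Vondracek-2012}; your direct computation of the first inter-arrival time $\Sub_{\tau_1}$ is a harmless supplement to citing \eqref{assump_hold}. The only detail left implicit (in the paper as well) is that, since $F$ has a density, $\widehat F(u)\to 0$ as $u\to\infty$, so the Bernstein function $\phi$ produced in your Part 2 is unbounded and the associated subordinator indeed satisfies the standing assumption \eqref{eq:ass} needed for Kingman's result and for the representation \eqref{def:reglevy}.
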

A popular example in this literature  is when the subordinator is an $\alpha$-stable subordinator, i.e. $\phi(u)=u^\alpha$ with $\alpha\in(0,1)$. The resulting Cox-renewal process is referred to as the fractional Poisson process and the holding time distribution $F$ has the Mittag-Leffler distribution  given by
\begin{equation}\label{mittag-leffler_distr}
F(x) = 1 - \textrm{E}_\alpha(-\lambda x^\alpha), \quad x\geq 0,
\end{equation}
where $\textrm{E}_\alpha(x)=\sum_{n= 0}^{\infty} \frac{x^n}{\Gamma(\alpha n+1)}$ is the Mittag-Leffler function.%

\bigskip

In the remaining part of this section, we state the main results on the first passage time problems. In Section \ref{sec:ex}, we illustrate our approach  by detailing two examples. The last section is devoted to the proof of the results of Section \ref{sec_intro}.

\subsection{The stochastic boundary  via a composite of Wiener-Hopf factorizations}\label{sec:fpt}

Before stating our first main result we recall some information regarding the Wiener-Hopf factorization of L\'evy processes and refer to \cite[Section 45]{Sato1999} for a nice exposition.
Throughout  ${\bf{e}}_p$ stands for an exponential random variable with parameter $p>0$ which is independent of the triple of processes $(\Lev,\Sub,\Sb)$.
Recall that $X$ is a L\'evy process with characteristic exponent $\Psi$. From the Wiener-Hopf factorization we know that, for any $p>0$, there exists two functions  ${\Phi}(p;z)$ and ${\widehat \Phi}(p;z)$ such that

\beq\label{def_WHfactors}
\frac{p}{p-\Psi(z)}={\Phi}(p;z){\widehat \Phi}(p;z), \quad z\in\mathbb R,
\eeq
where, with $\overline X_t=\sup_{0\leq s\leq t}X_s$ and $\underline X_t=\inf_{0\leq s\leq t}X_s$, the Wiener-Hopf factors ${\Phi}$ and ${\widehat \Phi}$ are identified as
\begin{equation*}
{\Phi}(p;z) = \mathbb E \left[ e^{i z \overline X_{{\bf{e}}_p}} \right], \quad {\widehat \Phi}(p;z) =  \mathbb E \left[ e^{i z \underline X_{{\bf{e}}_p}} \right].
\end{equation*}
 Note that $z\mapsto{\Phi}(p;z)$ can be analytically extended to $\Im(z)\geq0$. Moreover, if $X$ drifts to $-\infty$, i.e. $\lim_{t\to\infty}X_t=-\infty$ a.s., then  $\overline X_\infty:=\lim_{t\to\infty}\overline X_t<\infty$ a.s. and it is then not hard to show that
${\Phi}(0;z):=\lim_{p\downarrow0}{\Phi}(p;z)=\mathbb E \left[ e^{i z \overline X_{\infty}} \right]$ for any $\Im(z)\geq0$. We are now ready to state our first main result.

\begin{theo}\label{thm2}\label{thm1}
For any $q\geq 0$, the mapping
\begin{equation}\label{eq:Psidef}
z \mapsto
  \Psi_{\Ikea}(z) = \Psi(z)-\phi_{\Sub}(\phi_{\Sb}(iz)+q)+\phi_{\Sub}(q)
  \end{equation}
  is the characteristic exponent of a L\'evy process.
  Moreover,
 writing ${\Phi}_{\Ikea}$ and $\widehat{\Phi}_{\Ikea}$ for its Wiener-Hopf factors, we have, for any $q>0$, $p>0$, $v\geq 0$ with $p\neq v$,
\beq\label{main}
\E \left[ e^{ -q\T_{{\bf{e}}_p}^{(\Sb)} - v ( \X^{(\Sb)}_{\T_{{\bf{e}}_p}^{(\Sb)}} - \bf{e}_p ) } \right] = \frac{p}{p-v}\left(1-\frac{{\Phi}_{\Ikea}(\phi_{\Sub}(q);ip)}{{\Phi}_{\Ikea}(\phi_{\Sub}(q);iv)}\right)
\eeq
where we have set $\X^{(\Sb)}=(\X^{(\Sb)}_t=\X_t-\Sb_t)_{t\geq0}$.
If the L\'evy process with characteristic exponent $\Psi_{{\Sub_{\Sb}^{\triangleright 0}}}$ drifts to $-\infty$, then %
for any $p>0$, $v\geq 0$ with $p\neq v$,
\beq\label{main_qis0}
\E \left[ e^{ - v ( \X^{(\Sb)}_{\T_{{\bf{e}}_p}^{(\Sb)}} - \bf{e}_p ) } \mathbb I_{\{\T_{{\bf{e}}_p}^{(\Sb)} <\infty \}} \right] = \frac{p}{p-v}\left(1-\frac{{\Phi}_{{\Sub_{\Sb}^{\triangleright 0}}}(0;ip)}{{\Phi}_{{\Sub_{\Sb}^{\triangleright 0}}}(0;iv)}\right).
\eeq
If the L\'evy process with characteristic exponent $\Psi_{{\Sub_{\Sb}^{\triangleright 0}}}$ does not drift to $-\infty$, then $\T_{a}^{(\Sb)} <\infty$ a.s. for any $a\geq 0$. %
 \end{theo}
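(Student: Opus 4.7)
The first step is to exhibit $\Psi_{\Ikea}$ as the characteristic exponent of an explicit L\'evy process. The starting observation is that, for any $q\geq 0$, the map $u\mapsto\phi_{\Sub}(u+q)-\phi_{\Sub}(q)$ is a Bernstein function, since it is the Laplace exponent of the Esscher-tilted subordinator $\Sub^{(q)}$ associated with the exponential martingale $e^{-q\Sub_t+t\phi_{\Sub}(q)}$. Composing with $\phi_{\Sb}$, Bochner subordination then yields that the process $-\Sb$ time-changed by $\Sub^{(q)}$ is a L\'evy process with characteristic exponent $-\phi_{\Sub}(\phi_{\Sb}(iz)+q)+\phi_{\Sub}(q)$; adding an independent copy of $\Lev$ produces a L\'evy process with exponent exactly $\Psi_{\Ikea}$.

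\textbf{Pathwise reduction for \eqref{main}.} The core of the argument is a pathwise identity that reduces the first-passage problem for $\X$ over the stochastic boundary $a+\Sb$ to one for a plain L\'evy process. Because $\Lt$ is continuous and $\X=\Lev_{\Lt}$, the process $\X$ can only jump at times $t$ where $\Lt$ is strictly increasing, hence only at times $t\in\Rs$. Between two such times, $\X$ stays constant while $\Sb$ is non-decreasing, so $\X-\Sb$ cannot upcross a level off $\Rs$. Writing $t=\Sub_{\Lt_t}$ for $t\in\Rs$ then delivers
\begin{equation*}
\T_a^{(\Sb)}=\Sub_{\tau_a^Z},\qquad \X^{(\Sb)}_{\T_a^{(\Sb)}}-a=Z_{\tau_a^Z}-a,
\end{equation*}
where $Z_s:=\Lev_s-\Sb_{\Sub_s}$ and $\tau_a^Z:=\inf\{s>0:\,Z_s>a\}$. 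By Part~1 applied with $q=0$, $Z$ is a L\'evy process with characteristic exponent $\Psi_{{\Sub_{\Sb}^{\triangleright 0}}}$.

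\textbf{Esscher tilt, Wiener--Hopf, and Part~3.} To incorporate the weight $e^{-q\T_a^{(\Sb)}}$ I would apply an Esscher change of measure. Let $(\mathcal F_t)$ be the natural filtration of $(\Lev,\Sb,\Sub)$ and set $d\widetilde\P/d\P|_{\mathcal F_t}=e^{-q\Sub_t+t\phi_{\Sub}(q)}$. Under $\widetilde\P$, $\Lev$ and $\Sb$ keep their laws and mutual independence, while $\Sub$ has the law of $\Sub^{(q)}$; hence by Part~1, $Z$ has characteristic exponent $\Psi_{\Ikea}$ under $\widetilde\P$. As $\tau_a^Z$ is an $(\mathcal F_t)$-stopping time, the optional change-of-measure identity gives
\begin{equation*}
\E\bigl[e^{-q\Sub_{\tau_a^Z}-v(Z_{\tau_a^Z}-a)}\mathbb I_{\{\tau_a^Z<\infty\}}\bigr]=\widetilde\E\bigl[e^{-\phi_{\Sub}(q)\tau_a^Z-v(Z_{\tau_a^Z}-a)}\mathbb I_{\{\tau_a^Z<\infty\}}\bigr].
\end{equation*}
Randomising $a={\bf e}_p$ and invoking the classical Wiener--Hopf/Pecherskii--Rogozin identity for the joint Laplace of the first passage time and the overshoot of a killed L\'evy process at an independent exponential level (with killing rate $\phi_{\Sub}(q)$) produces exactly the right-hand side of \eqref{main}. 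The same scheme with $q=0$ yields \eqref{main_qis0} whenever the L\'evy process of exponent $\Psi_{{\Sub_{\Sb}^{\triangleright 0}}}$ drifts to $-\infty$; when it does not, standard fluctuation theory gives $\tau_a^Z<\infty$ a.s., whence $\T_a^{(\Sb)}=\Sub_{\tau_a^Z}<\infty$ a.s. The main obstacle I foresee is the rigorous justification of the pathwise identity $\T_a^{(\Sb)}=\Sub_{\tau_a^Z}$ in full generality, in particular ruling out upcrossings of $\X-\Sb$ that occur off $\Rs$ or at boundary points of $\Rs$, for which the semi-regenerative structure of Proposition~\ref{prop:regenerative} will be essential.
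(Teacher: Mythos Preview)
Your overall strategy---reduce to $\T_a^{(\Sb)}=\Sub_{\tau_a^Z}$ with $Z_s=\Lev_s-\Sb_{\Sub_s}$, Esscher-tilt $\Sub$ to trade $e^{-q\Sub_{\tau_a^Z}}$ for $e^{-\phi_\Sub(q)\tau_a^Z}$, then invoke Pecherskii--Rogozin---is exactly the paper's. Your direct construction in Part~1 (Esscher-tilt $\Sub$, then Bochner-subordinate $-\Sb$, then add an independent $\Lev$) is a pleasant variant; the paper instead extracts the L\'evy property of $\Psi_{\Ikea}$ as a by-product of a two-dimensional change of measure. The pathwise identity you flag as the main obstacle is indeed the content of Lemmas~\ref{lem:T=T} and~\ref{lem:T=kT}; their proofs use excursion theory for the range of $\Sub$ directly (together with the fact that $\Lev$ and $\Sub$ never jump simultaneously), not Proposition~\ref{prop:regenerative}.

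There is, however, a genuine gap in your Esscher step. You take $(\mathcal F_t)$ to be the natural filtration of $(\Lev,\Sb,\Sub)$ and stop the martingale $e^{-q\Sub_t+t\phi_\Sub(q)}$ at $\tau_a^Z$. But $Z_s=\Lev_s-\Sb_{\Sub_s}$ is \emph{not} $\mathcal F_s$-measurable: it requires the value of $\Sb$ at the random time $\Sub_s$, which is typically larger than $s$. Hence $\tau_a^Z$ is not an $(\mathcal F_t)$-stopping time, and your optional-stopping identity is not justified in that filtration. (Enlarging $\mathcal F_t$ by adjoining all of $\Sb$ does make $\tau_a^Z$ a stopping time, but then $Z$ is no longer a L\'evy process in the enlarged filtration, so Pecherskii--Rogozin does not apply under $\widetilde{\P}$.) The paper confronts this explicitly: it first proves that the pair $(Z,\Sub)$ is a \emph{bivariate} L\'evy process (Lemma~\ref{lem_levypair}), then performs the Esscher tilt in the canonical filtration of $(Z,\Sub)$, where $\tau_a^Z$ is a bona fide stopping time and the density $e^{-q\Sub_t+t\phi_\Sub(q)}$ remains a martingale (now read as a two-dimensional Esscher tilt in direction $(0,iq)$, Lemma~\ref{lem_2d_esscher}). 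Your argument needs this bivariate-L\'evy ingredient, or an equivalent filtration fix, to close.
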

 \begin{rem}
   Note that when $\Sb=0$, the composite Wiener-Hopf factorization \eqref{main} reduces to a subordinate Wiener-Hopf factorization as in this case $\Psi_{\Ikea} = \Psi$ and thus we have, for any $q>0$, $p>0$, $v\geq 0$ with $p\neq v$ and writing simply $\T_{a}=\T^{0}_{a}$,
\begin{equation*}%
\E \left[ e^{ -q\T_{{\bf{e}}_p} - v ( \X_{\T_{{\bf{e}}_p}} - \bf{e}_p ) } \right] = \frac{p}{p-v}\left(1-\frac{{\Phi}(\phi_{\Sub}(q);ip)}{{\Phi}(\phi_{\Sub}(q);iv)}\right).
\end{equation*}
As explained at the beginning of Section \ref{sec:proofmain} this result is obtained rather easily from a classical  time change technique whereas the proof of the general case requires a more refined analysis.
 \end{rem}

\bigskip

 We proceed by providing some interesting by-products and refined results of Theorem \ref{thm1} for two substantial   classes of semi-regenerative processes of L\'evy type, namely the spectrally negative and fractional ones.

 \subsection{Spectrally negative subdiffusive processes}

 We start by carrying out an in-depth analysis of the interesting class of semi-regenerative processes that creep upward, that is when the processes hit point above the starting point continuously. We refer  to them  as the class of spectrally negative semi-regenerative processes of L\'evy type. Since $\Lt$ has continuous paths,  this class is in bijection with the one of spectrally negative L\'evy processes and we refer to \cite[Sec.~46]{Sato1999}, \cite[Chap.~VII]{Bertoin-96} and \cite[Chapter 8]{Kyprianou-14} for a thorough account on these processes. In particular, such a  L\'evy process $\Lev$ does not experience any positive jumps i.e.~$\P(\Delta \Lev_t=\Lev_t-\Lev_{t-}>0, t\geq 0)=0$ and does not have non-increasing paths, i.e.~it is not the negative of a subordinator which is equivalent to $\mathrm{d}_X +\int_{-1}^0 |y| \Pi(dy)\in (0,\infty] $ in \eqref{LK}. As a byproduct, the L\'evy measure has support on the negative half-line, i.e.~$\Pi(0,\infty)=0$ in \eqref{LK}.  Then, this yields   that $\Psi$ admits an analytical extension to the lower half-plane, still denoted by $\Psi$, see \cite[Theorem 25.17]{Sato1999}, with $u \mapsto \Psi(-iu)$  a real-valued convex function on $\R^+$ with $\Psi(0)=0$ and $\lim_{u\to \infty} \Psi(-iu)= \infty$ and thus if $i\Psi'(0^+)<0$ then there exists an unique $\theta>0$ such that $\Psi(-i\theta)=0$. As $\Psi(-i u)$ is increasing and continuous for $[\theta_0, \infty)$ where $\theta_0=\theta \mathbb{I}_{\{i\Psi'(0^+)<0\}}$, it is a bijection and its inverse bijection $\phi: [0,\infty) \mapsto [\theta_0,\infty)$  satisfies  $\Psi(-i\phi(p))=p$ for $p\geq 0$.
It is known that $\phi$ is a Bernstein function, where we recall that $\phi$ is a Bernstein function if $\phi(u)-\phi(0)$ is of the form \eqref{LKs} with $\phi(0)\geq0$. %
In this context we  obtain the following refined results, where by a subordinator killed at rate $p\geq 0$, we mean
a $[0,\infty]$-valued c\`adl\`ag process which is in law equal to a subordinator  killed (i.e.~sent to the absorbing state $+\infty$) at time $\mathbf e_p$ with the understanding that $\mathbf e_0=\infty$.
\begin{prop}\label{cor:sn}
We have, for some (or equivalently any)  $a\geq 0$, that $\P(\T_a^{(\Sb)}<\infty)>0$ if and only if $\sigma^2>0$ or $\mathrm{d}_X +\int_{-1}^0 |y| \Pi(dy) - \dt \dk \in (0,\infty] $. Let us assume that this  holds.  %
Then,  the mapping $\Psi_{\Ikea}$ defined in \eqref{eq:Psidef} is the characteristic exponent of a spectrally  negative   L\'evy process and we denote by $\phi_{\Ikea}$ its inverse. Then, the following hold.

\begin{enumerate}[(1)]
	
\item \label{it:cor_1} The first passage time process $(\T_a^{(\Sb)})_{a\geq 0}$ is a subordinator killed at rate $\phi_{{\Sub_{\Sb}^{\triangleright 0}}}(0)$ with
\begin{equation}\label{eq:LTHT}
  \E\left[ e^{-q \T^{(\Sb)}_a } \mathbb I_{\{\T^{(\Sb)}_a <\infty\}} \right]= e^{-{\phi}_{\T^{(\Sb)}}(q)a}, \quad q\geq 0,
\end{equation}
where  $\phi_{\T^{(\Sb)}}(q)=\phi_{\Ikea}\circ \phi_{\Sub}(q)$ is a Bernstein function.

\item \label{it:csn2} If the inverse of the continuous and increasing function $q\mapsto \phi_{\T^{(\Sb)}}(q)=\phi_{\Ikea} \circ \phi_{\Sub}(q)$ is the Laplace exponent of a spectrally negative L\'evy process $\widetilde{\Lev}$ starting from $0$, then  we have the identity in law
\begin{equation}\label{eq:id_law}
      (\T^{(\Sb)}_a)_{a\geq 0} \stackrel{(d)}{=} (\widetilde{\Tm}_a)_{a\geq 0},
\end{equation}
where  $\widetilde{\Tm}_a = \inf\{t>0;\:  \widetilde{\Lev}_t > a\}$ is also the first passage time above $a$ of $\widetilde{\Lev}$. This condition holds for instance, if $\phi_{\Sb}\equiv 0$  (i.e.~$\Sb=0$ and thus $\T_a^{(\Sb)}=\T_a$), $\phi_{\Sub}(u)=u^{\alpha}$ and $\Psi(-iu)=u^{\beta}, u\geq 0,$ with $0<\alpha<1<\beta \leq 2\alpha$, see the example of Section \ref{ex:sel} below.

\item  \label{it:csn3} For any $a,q\geq0$ and $0< x\leq a$, we have, recalling that $\widehat{\T}_0 = \inf \{t>0;\: \X_t < 0\}$,
\begin{equation*}\label{eq:twosidedexit}
\begin{split}
\E_x \left[e^{-q \T_{a} }\mathbb{I}_{\{\T_{a} <\widehat{\T}_0\}} \right] = &  \frac{W^{(\phi_{\Sub}(q))}(x)}{W^{(\phi_{\Sub}(q))}(a)}, \\
\E_x \left[e^{-q \widehat{\T}_0 }\mathbb{I}_{\{\widehat{\T}_0<\T_{a}\}} \right] = & Z^{(\phi_{\Sub}(q))}(x) - \frac{ Z^{(\phi_{\Sub}(q))}(a)}{W^{(\phi_{\Sub}(q))}(a)}W^{(\phi_{\Sub}(q))}(x),
\end{split}
\end{equation*}
where  for $p\geq 0$ and $u>0$ large enough such that $\Psi(-iu)>p$,
\begin{equation*}
\int_{0}^{\infty} e^{-ux}W^{(p)}(x)dx=\frac1{\Psi(-iu)-p},
\end{equation*}
and for $p,x\geq 0$, $Z^{(p)}(x)=1+p\int_0^x W^{(p)}(y)d y$. %

\end{enumerate}
\end{prop}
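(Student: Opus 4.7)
The plan is to combine Theorem \ref{thm1} with the semi-regenerative structure of Proposition \ref{prop:regenerative}, exploiting throughout that $\X^{(\Sb)}=\X-\Sb$ has no positive jumps in the spectrally negative setting.

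First I verify that $\Psi_{\Ikea}$ defined in \eqref{eq:Psidef} is the characteristic exponent of a spectrally negative L\'evy process. Since $\Psi$ is SN, $u\mapsto\Psi(-iu)$ is convex on $\R^+$; as $\phi_{\Sub}$ is a non-decreasing concave Bernstein function and $\phi_{\Sb}$ is concave Bernstein, the composition $u\mapsto\phi_{\Sub}(\phi_{\Sb}(u)+q)$ is concave, so $u\mapsto\Psi_{\Ikea}(-iu)$ is convex, real-valued, and vanishes at $0$. Combined with Theorem \ref{thm1}, this characterizes a SN L\'evy exponent and its inverse $\phi_{\Ikea}$ is Bernstein. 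The criterion for $\P(\T^{(\Sb)}_a<\infty)>0$ is then that $\Psi_{{\Sub_{\Sb}^{\triangleright 0}}}(-iu)\to\infty$ as $u\to\infty$, which upon inspection of the Gaussian, drift and jump components reduces precisely to $\sigma^2>0$ or $\mathrm d_X+\int_{-1}^0|y|\Pi(dy)-\dt\dk\in(0,\infty]$.

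For part (\ref{it:cor_1}), the absence of positive jumps of $\X^{(\Sb)}$ follows from $\Lev$ being SN, $\Lt$ continuous non-decreasing, and $-\Sb$ being non-increasing. On any plateau of $\Lt$ (caused by a jump of $\Sub$), $\X$ is constant and hence $\X^{(\Sb)}$ is non-increasing, ruling out an upward crossing of $a$ on such a plateau; therefore $\T^{(\Sb)}_a\in\Rs$ a.s.~on $\{\T^{(\Sb)}_a<\infty\}$ and by creeping $\X^{(\Sb)}_{\T^{(\Sb)}_a}=a$. Substituting the SN Wiener-Hopf factor $\Phi_{\Ikea}(p;iw)=\phi_{\Ikea}(p)/(\phi_{\Ikea}(p)+w)$ into \eqref{main} causes the overshoot term to vanish and yields
\[
\E\left[e^{-q\T^{(\Sb)}_{\mathbf{e}_p}}\right]=\frac{p}{\phi_{\Ikea}(\phi_{\Sub}(q))+p},
\]
so Laplace inversion in $p$ gives \eqref{eq:LTHT}. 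To obtain the subordinator structure, I enlarge the filtration in Proposition \ref{prop:regenerative} by $\sigma(\Sb_s,\,s\le t)$ (justified by independence) and apply the semi-regenerative property at $\T^{(\Sb)}_a\in\Rs$, together with the strong Markov property of the L\'evy process $\Sb$, to obtain that $\T^{(\Sb)}_{a+b}-\T^{(\Sb)}_a$ is independent of the past and distributed as $\T^{(\Sb)}_b$. The resulting stationary independent increments, combined with closure of Bernstein functions under composition, deliver that $(\T^{(\Sb)}_a)_{a\ge0}$ is a subordinator killed at rate $\phi_{\Ikea}(0)$ with Bernstein Laplace exponent $\phi_{\Ikea}\circ\phi_{\Sub}$. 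Part (\ref{it:csn2}) then follows because two subordinators with identical Laplace exponents coincide in law: when $\phi_{\T^{(\Sb)}}$ is the inverse of some SN L\'evy exponent $\widetilde\Psi(-i\cdot)$, the classical identity $\E[e^{-q\widetilde T_a}]=e^{-\phi_{\T^{(\Sb)}}(q)a}$ matches \eqref{eq:LTHT}. For the stated example, direct computation gives $\phi(u)=u^{1/\beta}$, hence $\phi_{\T^{(\Sb)}}(q)=q^{\alpha/\beta}$ whose inverse $u^{\beta/\alpha}$ is the Laplace exponent of the SN $(\beta/\alpha)$-stable process since $\beta/\alpha\in(1,2]$.

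For part (\ref{it:csn3}), the idea is to reduce to the classical two-sided exit problem for $\Lev$ by conditioning on $\sigma(\Lev)$. Let $\tau_a^{\Lev}=\inf\{s>0:\Lev_s>a\}$ and $\widehat\tau_0^{\Lev}=\inf\{s>0:\Lev_s<0\}$. The monotone continuous time change yields $\{\T_a<\widehat\T_0\}=\{\tau_a^{\Lev}<\widehat\tau_0^{\Lev}\}$, and on this event creeping of $\Lev$ at $\tau_a^{\Lev}$ combined with the plateau analysis above gives $\T_a=\Sub_{\tau_a^{\Lev}}$ a.s.; symmetrically $\widehat\T_0=\Sub_{\widehat\tau_0^{\Lev}}$ a.s.~on $\{\widehat\tau_0^{\Lev}<\tau_a^{\Lev}\}$, using that $\Sub$ has no fixed discontinuities. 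Since $\Sub$ is independent of $\Lev$, conditioning and applying $\E[e^{-q\Sub_s}]=e^{-\phi_{\Sub}(q)s}$ yield
\[
\E_x\left[e^{-q\T_a}\mathbb I_{\{\T_a<\widehat\T_0\}}\right]=\E_x\left[e^{-\phi_{\Sub}(q)\tau_a^{\Lev}}\mathbb I_{\{\tau_a^{\Lev}<\widehat\tau_0^{\Lev}\}}\right]=\frac{W^{(\phi_{\Sub}(q))}(x)}{W^{(\phi_{\Sub}(q))}(a)},
\]
the last equality being the classical scale function identity for SN L\'evy processes; the $Z$-formula follows identically from its classical counterpart. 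The main obstacle throughout is the rigorous identification of the first passage times of $\X^{(\Sb)}$ (resp.~$\X$) with values of $\Sub$ at the corresponding $\Lev$-stopping times in the presence of plateaus; this is resolved via the non-increasing behavior of $\X^{(\Sb)}$ on such plateaus together with the absence of fixed discontinuities of $\Sub$.
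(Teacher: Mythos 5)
Your Wiener--Hopf computation leading to \eqref{eq:LTHT}, and your treatments of items (2) and (3), essentially reproduce the paper's arguments and are fine. The genuine gap is in item (1), at the step establishing that $(\T^{(\Sb)}_a)_{a\geq 0}$ has stationary, independent increments when $\Sb\neq 0$. You assert this by ``enlarging'' the filtration of Proposition \ref{prop:regenerative} with $\sigma(\Sb_s,\,s\le t)$ and invoking ``independence'' together with the strong Markov property of $\Sb$. But $\T^{(\Sb)}_a$ is not an $(\widetilde{\mathcal{F}}_t)$-stopping time (it depends on $\Sb$), nor is it a stopping time for the filtration of $\Sb$ alone, and your enlarged filtration mixes two clocks ($\Sb$ observed at physical time $t$, $(\Lev,\Sub)$ observed at the time-changed instant $\Lt_t$). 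Proposition \ref{prop:regenerative} is neither stated nor proved for such stopping times, and the independence of $\Sb$ from $(\Lev,\Sub)$ does not by itself undo the entanglement created by stopping at $\T^{(\Sb)}_a$; a joint regeneration lemma for the enlarged, mixed-clock filtration would have to be proved, and that is the real technical content. The paper resolves exactly this by recasting the problem through the L\'evy process $\Xbs_t=\Lev_t-\Sb_{\Sub_t}$ and the time-changed process $\X_t-\Sb_{\Sub_{\Lt_t}}$ (Lemmas \ref{lem:T=T} and \ref{lem:T=kT}, Lemma \ref{lem_levypair}), and then applying the extended Proposition \ref{prop:reg} for bivariate L\'evy processes with \emph{dependent} components --- an extension introduced precisely for this purpose. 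Without that (or an equivalent, proved, joint-regeneration statement), the killed-subordinator property of $(\T_a^{(\Sb)})_{a\ge0}$ for $\Sb\neq0$ remains unproven in your write-up. (Also, your plateau argument for $[\T_a^{(\Sb)}]\subseteq\Rs$ only rules out crossings in the interior of a plateau; the left endpoints, i.e.\ the jump times of $\Sub_{\Lt}$, need the separate no-common-jumps argument of Lemma \ref{lem:T=T}.)

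A second, related error: you invoke ``closure of Bernstein functions under composition'' to get the Bernstein property of $\phi_{\T^{(\Sb)}}=\phi_{\Ikea}\circ\phi_{\Sub}$. This is not available when $\Sb\neq0$, because $\Psi_{\Ikea}$ in \eqref{eq:Psidef} depends on $q$, so $\phi_{\Ikea}$ is not a fixed Bernstein function and $\phi_{\T^{(\Sb)}}(q)=\phi_{{\Sub_{\Sb}^{\triangleright q}}}(\phi_{\Sub}(q))$ is not a composition of two Bernstein functions; the paper's remark after the proposition makes exactly this point and explains that the Bernstein property is deduced from the killed-subordinator structure itself --- i.e.\ from the very step discussed above. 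A smaller issue: convexity and finiteness of $u\mapsto\Psi_{\Ikea}(-iu)$ do not characterize spectral negativity (any L\'evy process with exponential moments has a convex, finite cumulant on $\R^+$); the correct justification is that $\Psi_{\Ikea}$ is the exponent of $\Lev$ minus an independent subordinator with Laplace exponent $u\mapsto\phi_{\Sub}(\phi_{\Sb}(u)+q)-\phi_{\Sub}(q)$, which can only add negative jumps.
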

\begin{rem}
Note that the function $\phi_{\T^{(\Sb)}}$ in item \eqref{it:cor_1} is not, in the case $\Sb \neq 0$, a simple composition of Bernstein functions. In fact, we could not find an analytical or direct proof of its Bernstein property. Instead, we resort to the theory of semi-regenerative processes to show that the process $(\T_a^{(\Sb)})_{a\geq 0}$ is a (possibly killed) subordinator with Laplace exponent $\phi_{\T^{(\Sb)}}$  from which the Bernstein  property follows. %
\end{rem}

\begin{rem}
We point out that, in item \eqref{it:csn3}, we  consider only the case $\Sb=0$ as our proof requires that the stopping times we consider should have their graph included in $\Rs$ a.s., which is the case for $\T^{(\Sb)}_a$ or $\widehat{\T}_a^{(\Sb)}$. However, excluding trivial cases, we have, for all $x>0$, that  $\P_x(\inf \{t>0;\: \X^{(\Sb)}_t < 0\}\notin\Rs\cup\{\infty\})>0$ for any $\Sb\neq 0$.
\end{rem}

\subsection{The fractional subdiffusive processes} \label{sec:self}

We proceed with the study of  another interesting  class of subdiffusive  processes  which are defined  by using the inverse of a stable  subordinator as the time change.  When the L\'evy process is a Brownian motion, it has been intensively  studied, see e.g.~\cite{Baeu,Hairer}, where it is called the fractional kinetic process.  In this spirit, we name such a generalization  a fractional subdiffusive process.  %
\begin{prop}\label{cor:ident}
Let us assume that $\Sub$ is an $\alpha$-stable subordinator  with $\phi_{\Sub}(u)=u^{\alpha}$,  $0<\alpha<1$.  Then, under   $\P_{-x}$, $x>0$, the following holds.
\begin{enumerate}[(1)]
  \item \label{it:cord1} We have
 \beq\label{idT}
\T_0 \stackrel{(d)}{=} \Sub_1 \times \Tm_0^{\frac{1}{\alpha}}
\eeq
where  $\times$ stands for the product of independent variables and we have set $\Tm_0 = \inf \{t>0;\: \Lev_t > 0\} $. Consequently, $\P_{-x}(\Tm_0=\infty)=\P_{-x}(\T_0=\infty)$ and otherwise on $ [0,\infty)$ the law of $\T_0$ is absolutely continuous with a density denoted by $f_{\T_0}$.
   \item \label{it:corm} Moreover, $\E_{-x}\left[\T_0^{\alpha}\right]=\infty $ but
    \begin{equation}\label{eq:mom}
    \E_{-x}\left[\T_0^{\delta}\right]<\infty \textrm{ for some } 0<\delta<\alpha \Leftrightarrow \int_0^{1}  \exp\left(\int_1^{\infty}e^{-qt}\P(\Lev_t\leq 0) \frac{dt}{t}\right)q^{-\frac{\delta}{\alpha}}dq < \infty.
    \end{equation}
    If one of these equivalent conditions holds, then,  \[ f_{\T_0} \textrm{ admits an analytic extension to the sector } \C_{\frac{1-\alpha}{\alpha}\frac{\pi}{2}}=\left\{z\in\C;\,|\arg z|<\frac{1-\alpha}{\alpha}\frac{\pi}{2}\right\}.\] Moreover, $ f_{\T_0}\in C^{\infty}_0(\R^+)$, the space of infinitely continuously differentiable functions vanishing at infinity,   and  its successive derivatives  $f^{(n)}_{\T_0}, n=0,1,\ldots,$  admit the Mellin Barnes representation,
  for any   $0<a <\min(\alpha,\frac{\delta}{\alpha})$,
\begin{equation*}\label{eq:MITd}
  	 f_{\T_0}^{(n)}(t)=\frac{(-1)^n}{2\pi i}\int^{a+i\infty}_{a-i\infty}t^{-z-n}\frac{\Gamma(z+n)}{\Gamma(z)}\frac{\Gamma(1-\frac{z}{\alpha})}{\Gamma(1-z)}\E_{-x}\left[\Tm_0^{\frac{z}{\alpha}}\right]dz,
  	 	\end{equation*}
  where the integral is absolutely convergent for any $ t\in \C_{\frac{1-\alpha}{\alpha}\frac{\pi}{2}}$.
\item If  $\E_{-x}\left[\Tm_0^{\delta+1}\right]<\infty \textrm{ for some  } \delta>0$, then, for any $n=0,1,\ldots$,
   \begin{equation}\label{eq:MITda}
  	 f_{\T_0}^{(n)}(t)\sim (-1)^n \frac{\sin(\alpha \pi)}{\pi}\Gamma(\alpha+n)\E_x\left[\Tm_0\right]t^{-\alpha -n } \textrm{ as } t\to \infty,	 	
  	 \end{equation}
  \end{enumerate}
  and thus $\P_{-x}(\T_0>t) \sim  \frac{\E_{-x}\left[\Tm_0\right]}{\Gamma(2-\alpha)}t^{1-\alpha} \textrm{ as } t\to \infty$.
\end{prop}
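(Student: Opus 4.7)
My plan is to base everything on the pathwise identity $\T_0 = \Sub_{\Tm_0}$ under $\P_{-x}$, from which \eqref{idT} follows, and then to analyse the law of $\T_0$ through its Mellin transform. For item (1), since $\Lt$ is continuous and non-decreasing with range $[0,\infty)$, being flat exactly on the closed intervals $[\Sub_{s-},\Sub_s]$, the event $\{\X_t>0\}=\{\Lev_{\Lt_t}>0\}$ is first realised at $t=\Sub_{\Tm_0-}$; since $\Tm_0$ is a functional of $\Lev$ alone and hence independent of the stable subordinator $\Sub$ (which has no fixed-time discontinuities), $\Sub_{\Tm_0-}=\Sub_{\Tm_0}$ a.s. Conditioning on $\Tm_0$ and invoking the self-similarity $\Sub_s\stackrel{(d)}{=}s^{1/\alpha}\Sub_1$ then delivers \eqref{idT}. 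Since $\Sub_1>0$ a.s.\ we immediately get $\{\T_0=\infty\}=\{\Tm_0=\infty\}$, and since $\Sub_1$ possesses a smooth positive density on $(0,\infty)$ a conditioning/change-of-variables argument yields absolute continuity of the law of $\T_0$ on $[0,\infty)$.

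For item (2), the factorisation gives $\E_{-x}[\T_0^{\delta}]=\E[\Sub_1^{\delta}]\E_{-x}[\Tm_0^{\delta/\alpha}]$, where the stable Mellin transform $\E[\Sub_1^{\delta}]=\Gamma(1-\delta/\alpha)/\Gamma(1-\delta)$ is infinite at $\delta=\alpha$ and finite on $(0,\alpha)$; so finiteness of the $\delta$-th moment of $\T_0$ reduces to finiteness of the $(\delta/\alpha)$-th moment of $\Tm_0$. To reformulate the latter as the explicit integral criterion \eqref{eq:mom}, I would combine the identity $\E[T^p]=\frac{p}{\Gamma(1-p)}\int_0^\infty(1-\E[e^{-qT}])q^{-p-1}dq$ for $p\in(0,1)$ with a Pecherskii--Rogozin/Fristedt-type representation of $\E_{-x}[e^{-q\Tm_0}]$ in terms of the occupation probabilities $\P(\Lev_t\leq 0)$, then isolate the behaviour of the resulting integrand as $q\downarrow 0$. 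The Mellin--Barnes formula for $f_{\T_0}^{(n)}$ comes from Mellin-inverting the factored transform and differentiating $n$ times under the integral sign, producing the $(-1)^n\Gamma(z+n)/\Gamma(z)$ factor. To justify the interchange, and to prove analyticity of $f_{\T_0}$ in the sector $\C_{(1-\alpha)\pi/(2\alpha)}$ together with the $C^\infty_0(\R^+)$ property, I would apply Stirling to show that on any vertical line $\Re(z)=a$ the Gamma-quotient $|\Gamma(1-z/\alpha)/\Gamma(1-z)|$ decays like $\exp(-(1-\alpha)\pi|\Im z|/(2\alpha))$; combined with the polynomial boundedness of $\E_{-x}[\Tm_0^{z/\alpha}]$ along the line and the identity $|t^{-z}|=|t|^{-a}\exp(\arg(t)\,\Im z)$, this gives absolute convergence precisely when $|\arg t|<(1-\alpha)\pi/(2\alpha)$.

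For item (3), the large-$t$ asymptotic follows by shifting the Mellin contour rightward past the simple pole of $\Gamma(1-z/\alpha)$ at $z=\alpha$ (residue $-\alpha$); this is legitimate because the hypothesis $\E_{-x}[\Tm_0^{1+\delta}]<\infty$ extends the analyticity of $z\mapsto\E_{-x}[\Tm_0^{z/\alpha}]$ beyond $\Re(z)=\alpha$, and the Gamma-quotient still decays exponentially on the new contour so that the shifted integral is $o(t^{-\alpha-n})$. Evaluating the residue and using the reflection formula $\Gamma(\alpha)\Gamma(1-\alpha)=\pi/\sin(\alpha\pi)$ yields the announced leading term \eqref{eq:MITda}, and the tail estimate for $\P_{-x}(\T_0>t)$ then follows either by integrating this leading term with Tauberian control on the error or, more cleanly, by applying the same residue calculation to the Mellin transform $\E_{-x}[\T_0^z]/z$ of the survival function. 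I anticipate that the main technical hurdle will be the fluctuation-theoretic derivation of the explicit integral representation in \eqref{eq:mom}, together with the uniform Stirling control needed both for analyticity in the open sector and for rigorously justifying the contour shift.
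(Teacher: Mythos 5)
Your route coincides with the paper's in all but one place. Item (1) is essentially the paper's argument: the a.s.\ identity $\T_0=\Sub_{\Tm_0}$ is Lemma \ref{lem:T=kT} with $\Sb=0$, and your dichotomy between the jump case (passage at $\Sub_{\Tm_0-}$) and the creeping case (passage at $\Sub_{\Tm_0}$), resolved by the independence of $\Sub$ and the $\Lev$-measurable time $\Tm_0$ so that $\Sub_{\Tm_0-}=\Sub_{\Tm_0}$ a.s., is exactly how that lemma is proved; conditioning, the scaling $\Sub_s\stackrel{(d)}{=}s^{1/\alpha}\Sub_1$ and absolute continuity of the stable law then give \eqref{idT} and the density, as in the paper. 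Your Mellin analysis for items (2)--(3) is also the paper's proof almost verbatim: the bound $|\E_{-x}[\Tm_0^{z/\alpha}]|\leq\E_{-x}[\Tm_0^{\Re(z)/\alpha}]$ on vertical lines, the Stirling decay $\exp\bigl(-\tfrac{1-\alpha}{\alpha}\tfrac{\pi}{2}|\Im z|\bigr)$ of $\Gamma(1-z/\alpha)/\Gamma(1-z)$, Mellin inversion and differentiation under the integral, the sector of analyticity read off from $|t^{-z}|$, and the contour shift past the simple pole of $\Gamma(1-z/\alpha)$ at $z=\alpha$ combined with the reflection formula. Your suggestion to get the tail by applying the same residue argument to $\E_{-x}[\T_0^{z}]/z$ is a legitimate (arguably cleaner) variant of the paper's step of integrating the $n=0$ asymptotic.

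The genuine gap is the equivalence \eqref{eq:mom}. After the reduction $\E_{-x}[\T_0^{\delta}]<\infty\Leftrightarrow\E_{-x}[\Tm_0^{\delta/\alpha}]<\infty$ (valid since $\E[\Sub_1^{\delta}]<\infty$ for $\delta<\alpha$), the stated integral criterion is precisely Doney and Maller's Theorem 2 \cite{Doney-Maller} applied to $\Tm_0$ with exponent $\delta/\alpha$, and the paper simply cites it. You instead propose to re-derive it from the moment identity $\E[T^{p}]=\tfrac{p}{\Gamma(1-p)}\int_0^{\infty}(1-\E[e^{-qT}])q^{-p-1}dq$ together with a Fristedt/Pecherskii--Rogozin representation and then to ``isolate the behaviour of the integrand as $q\downarrow0$''. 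That last step is the entire content of the cited theorem: one must compare $1-\E_{-x}[e^{-q\Tm_0}]$ two-sidedly, up to $x$-dependent constants, with the ascending ladder-time exponent, which via Fristedt's formula behaves like $\exp\bigl(\int_1^{\infty}e^{-qt}\P(\Lev_t\leq 0)\tfrac{dt}{t}\bigr)$ as $q\downarrow 0$, and this requires the Wiener--Hopf factorization, the renewal function of the ladder height process and a genuinely quantitative argument, not a routine asymptotic extraction. As written, your treatment of \eqref{eq:mom} is a plan rather than a proof; either invoke \cite{Doney-Maller} as the paper does, or supply this fluctuation-theoretic comparison in full.
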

\begin{rem}
The factorization \eqref{idT} and the item \eqref{it:corm} reveal that the time-change smooths out the law  of the first passage  time $\T_0$ compared to the equivalent one for the L\'evy process. Indeed, first,  the law of $T_0$ is well known to do not be necessarily absolutely continuous (for instance when $\Lev$ is a compound Poisson process plus a positive drift), and the exponential decay along imaginary lines of the Mellin transform of $\Sub_1$, see the proof of this proposition, improves the regularity of the law of  $\T_0$ compared to the one  of $T_0$.
\end{rem}
\begin{rem}
Although the condition in \eqref{eq:mom} is not  completely explicit, one can get, using the discussion following \cite[Theorem 2]{Doney-Maller},  that $\E_{-x}\left[\T_0^{\delta}\right]<\infty$ for $0<\delta <\min(\alpha,\rho)$ where $\lim_{t\to \infty}\P(\Lev_t<0)=1-\rho\in [0,1]$.
\end{rem}

\section{Examples}\label{sec:ex}

In this section, we explore in detail two different examples that illustrate the main results on the first passage time problems. We emphasize  that one could generate a variety of additional examples as most of our results  are expressed in terms of the positive Wiener-Hopf factor ${\Phi}$ which admits an explicit expression in quite a lot of cases, e.g.~for any L\'evy process which has positive jumps of finite intensity with a rational Laplace transform, see \cite{Lewis-Mordecki-2008}.

\subsection{Self-similar subdiffusive processes} \label{ex:sel}

We proceed by investigating the case  when $\Lev$ is a $\mathfrak a$-stable process, $\mathfrak a \in (0,2)$ with positivity parameter  $\rho = \P(\Lev_1>0) \in (0,1)$ and $\Sub$ is, as in Proposition \ref{cor:ident}, an $\alpha$-stable subordinator, with $0<\alpha<1$. We exclude the case when  $X$ is a subordinator, that is  $\mathfrak a \in (0,1)$ and $\rho=1$ and recall that if $\mathfrak a \in (1,2)$ then $\mathfrak a \rho \geq \mathfrak a -1$. By \cite[Chapter VIII]{Bertoin-96}, we know that the L\'evy measure of $X$ is absolutely continuous  and takes the form
\begin{equation*}
\Pi(dy)=(c_+y^{-\mathfrak a-1}\mathbf{1}_{\{y>0\}}+c_{-}|y|^{-\mathfrak a-1}\mathbf{1}_{\{y<0\}})dy, \:y\in \R.
\end{equation*}
for some positive constants $c_{-},c_{+}>0$. %
Recall that  $\Lt$ has  a.s.~continuous and non-decreasing paths and inherits the $\frac{1}{\alpha}$-self-similarity property from $\Sub$ and thus as $\Lev$ and $\Lt$ are independent, one easily gets that $\X$ is a $\frac{\mathfrak a}{\alpha}$-self-similar process which means  that the identity
\begin{equation*}\label{eq:defselfsim}
  (\X_{c^{\frac{\mathfrak a}{\alpha}}t},\P_{cx})_{t\geq 0} \stackrel{(d)}{=} (c \X_t,\P_{x})_{t\geq 0}
\end{equation*}
holds in the sense of finite-dimensional distributions for any $c>0$ and $x\in \R$. We point out that this example is not treated in the recent paper \cite{LPS} where an in-depth analysis of the absorption time of a general class of self-similar non-Markovian processes is carried out.  In order to derive  the expression of the Mellin transform of $\widehat{T}_0=\inf \{t>0;\: \Lev_t < 0\} $, one uses the fact that $X$ killed upon entering the negative half-line is a positive self-similar Markov process of index $\mathfrak a$ and thus according to Lamperti, we have the identity in law
 \begin{equation}\label{eq:id_T_exp}
 \widehat{\Tm}_0 \stackrel{(d)}{=} x^{\mathfrak a} \int_{0}^{\infty}\exp(\mathfrak a Y_t)dt<\infty
 \end{equation}
 where $(Y_t)_{t\geq0}$ is a L\'evy process whose L\'evy-Khintchine exponent is expressed in terms of its Wiener-Hopf factors as follows
\begin{align}\label{eq:stable}
\Psi_{\mathfrak a Y} (z)&=-\frac{\Gamma(1+\mathfrak a z)}{\Gamma(1-\mathfrak{a}(1-\rho)+\mathfrak a  z)}\frac{\Gamma(\mathfrak{a}-\mathfrak a  z)}{\Gamma(\mathfrak{a}(1-\rho)-\mathfrak a z)}=-\phi_{\mathfrak a }^-(z)\phi_{\mathfrak a}^+(-z),
\end{align}
 see e.g.~\cite{Kypri}[Theorem 2.3] and \cite[Section 2]{LPS} where thereout  $\mathfrak a=\alpha$ for more details. Note that
 $ \P_{-x}(\inf \{t>0;\: \widehat{\Lev}_t > 0\} \leq t ) = \P_{x}(\widehat{\Tm}_0\leq t )$ where $\widehat{\Lev}=-X$, as the dual of $\Lev$, is also a $\mathfrak a$-stable process  with positivity parameter $1-\rho$.
 Then, recalling that
 the Barnes gamma function $G$ is the unique log-convex solution to the functional equation, for $u,\tau>0$,
 $G_\tau(u+1)=\Gamma\left(\frac{u}{\tau}\right)G_\tau(u)$
see \cite{Barnes}, we get that the mapping
\begin{equation}\label{eq:W_phm}
W^-_{\mathfrak a,\rho}(z+1)=\frac{G_{\frac{1}{\mathfrak a}}(1+\frac{1}{\mathfrak a}+\rho-1)}{G_{\frac{1}{\mathfrak a}}(\frac{1}{\mathfrak a}+1)}\frac{
G_{\frac{1}{\mathfrak a}}(z+\frac{1}{\mathfrak a}+1)}{G_{\frac{1}{\mathfrak a}}(z+\frac{1}{\mathfrak a}+\rho)}
 \end{equation}
$(\textrm{resp.~} W^+_{\mathfrak a, \rho}(z+1)= \frac{G_{\frac{1}{\mathfrak a}}\left(1-\rho\right)}{G_{\frac{1}{\mathfrak a}}\left(1\right)}
 \frac{G_{\frac{1}{\mathfrak a}}\left(z+2\right)}{G_{\frac{1}{\mathfrak a}}\left(z+2-\rho\right)})$ is the unique log-convex on $\R^+$ solution   to $f(z+1)=\phi_{\mathfrak a }^-(z)f(z), \Re(z)>1-\rho-\frac{1}{\mathfrak a}$ (resp.~$f(z+1)=\phi_{\mathfrak a }^+(z)f(z), \Re(z)>\rho-1), f(1)=1$, see again \cite{Patie-Savov-BG} for a study of these functional equations for general Bernstein functions.
 Next, using the identity  \eqref{eq:id_T_exp} and the expression of the Mellin transform  of the so-called exponential functional which is found in \cite[Theorem 2.4]{Patie-Savov-BG} (note that the exponential functional in this paper is defined with the L\'evy process $\xi=-\alpha Y$) we get in this case that, for any $-1<\Re(z)<1-\rho$,
\[ \E_x\left[\widehat{\Tm}_0^{z}\right] = x^{\mathfrak a z}\frac{\Gamma(\mathfrak{a} )}{\Gamma(\mathfrak{a}(1-\rho))}\frac{\Gamma(z+1)W^+_{\mathfrak a,\rho}(-z)}{W^-_{\mathfrak a,\rho}(z+1)}.\]
By means of the identity in law \eqref{idT} and the expression of the (shifted) Mellin transform of the stable subordinator recalled in \eqref{eq:ms}, we get that, for any $-\alpha<\Re(z)< \alpha(1-\rho)$,
\begin{equation}
\E_x\left[\widehat{\T}_0^{z}\right] = x^{\frac{\mathfrak a}{\alpha} z}\frac{\Gamma(\mathfrak{a} )}{\Gamma(\mathfrak{a}(1-\rho))}\frac{\Gamma(1-\frac{z}{\alpha})}{\Gamma(1-z)}\frac{\Gamma(\frac{z}{\alpha}+1)\Gamma(-\frac{z-1}{\alpha})W^+_{\mathfrak a,\rho}(-\frac{z}{\alpha})}{\Gamma(\frac{z+1}{\alpha})W^-_{\mathfrak a,\rho}(\frac{z}{\alpha}+1)}
\end{equation}
where we recall that $ \widehat{\T}_0=\inf \{t>0;\: \X_t < 0\}$.
Moreover, %
applying the (complex) Stirling formula for the gamma function recalled in \eqref{eq:asympt_gamma} below together with the asymptotic expansion of the Barnes gamma functions found in \cite[formula (4.5)]{BK} to both $W^+_{\mathfrak a, \rho}$ and $W^-_{\mathfrak a, \rho}$  one gets that, for any fixed $ -\alpha<a< \alpha(1-\rho)$ and with $z=a+ib$,
\[ \left|\E_x\left[\widehat{\T}_0^{z}\right]\right| \leq  C_a e^{-|b|(2-\alpha+\mathfrak{a}(2\rho-1))\frac{\pi}{2\alpha}}\]
for some $C_a>0$ and where $2-\alpha+\mathfrak{a}(2\rho-1)>0$. This combined with the analyticity property of $z\mapsto \E_x\left[\widehat{\T}_0^{z}\right]$ on the strip $-\alpha<\Re(z)<\alpha(1-\rho)$ allows us to use Mellin inversion techniques combined with a dominated convergence argument to get that the law of $\widehat{\T}_0$ is absolutely continuous with a density  $f_{\widehat{\T}_0} \in C^{\infty}_0(\R^+)$ and which admits an analytical extension to  the sector $\{z\in\C;\,|\arg z|<(\frac{1-\alpha}{\alpha}+\frac{1}{\mathfrak{a}}+(2\rho-1))\frac{\pi}{2}\}$  given, along with its successive derivatives,  by the following Mellin Barnes integral   for any $n \in \mathbb{N}$
  and $ -\alpha<a< \alpha(1-\rho)$,
  \begin{equation}\label{eq:MITd}
  	 f_{\widehat{\T}_0}^{(n)}(t)=(-1)^n\frac{\Gamma(\mathfrak{a} )}{\Gamma(\mathfrak{a}(1-\rho))}\frac{1}{2\pi i}\int^{a+i\infty}_{a-i\infty}t^{-z-n}\Gamma(z+n)\frac{\Gamma(1-\frac{z}{\alpha})}{\Gamma(1-z)}\frac{W^+_{\mathfrak a,\rho}(1-z)}{W^-_{\mathfrak a,\rho}(z)}dz,
  	 	\end{equation}
  where the integral is absolutely convergent for any $t>0$, see e.g.~\cite[Section 1.7.4]{Patie-Savov-16} for a review of some basic facts on Mellin transform.

  Let us now focus to the case when $X$ is  spectrally negative that is its L\'evy measure takes the form $\Pi(dy)=c_{-}|y|^{-\mathfrak a-1}\mathbf{1}_{\{y<0\}}dy$. In this case,  $\rho=\frac{1}{\mathfrak a}$ with $1<\mathfrak a\leq 2$ and   $\Psi(-iu)=u^{\mathfrak a}, u\geq 0$,  where the case $\mathfrak a=2$ corresponds to a (scaled) Brownian motion.  It is easy to check, from Proposition \ref{cor:sn}\eqref{it:cor_1}, that  under $\P$, the process of first passage times $(\T_a)_{a\geq 0}$, where we recall that $\T_a=\T_a^0$, is a $\frac{\alpha}{\mathfrak a}$-stable  subordinator which is, in the case when $\mathfrak a \leq 2 \alpha$, the law of the process of first passage times of the spectrally negative stable L\'evy process with index $1<\frac{\mathfrak a}{\alpha}\leq 2$. Next, the recurrence relation of the Barnes function given above and  the identity $G_{\tau}(z+\tau)=(2\pi)^{\frac{\tau-1}{2}}\tau^{-z+\frac12}\Gamma(z)G_{\tau}(z)$ found in \cite[bottom of p.371]{Barnes}, give from \eqref{eq:stable} that
for any $-1<\Re(z)<1-\frac{1}{\mathfrak a}$,
\begin{equation*}
\E_x\left[\widehat{\T}_0^{z}\right] = x^{\frac{\mathfrak a}{\alpha} z} \frac{\sin(\pi/\mathfrak a)}{\pi}\frac{\Gamma(1-\frac{z}{\alpha})}{\Gamma(1-z)}\frac{\Gamma(1 + \frac{z}{\alpha}) \Gamma(\frac{z}{\alpha} + \frac{1}{\mathfrak a})\Gamma(1- \frac{1}{\mathfrak a}-\frac{z}{\alpha})}{
\Gamma(1 + \frac{\mathfrak a z}{\alpha}) }.
\end{equation*}
Then, a classical application of the Cauchy  theorem to the Barnes integral of the form \eqref{eq:MITd} which is obtained by Mellin inversion yields that the density of  $\widehat{\T}_0$ when $\X$ is starting from $1$ has the series representation, for any $t>0$,
  \begin{equation*}
f_{\widehat{\T}_0}(t) = \frac{1}{ \mathfrak a \pi}\sum_{n=0}^\infty (\sin(\alpha \pi(n+1))a_n +\sin(\alpha \pi(n+1-\frac{1}{\mathfrak a})) b_nt^{ \frac{\alpha}{\mathfrak a}})t^{-\alpha(n+1)-1}
\end{equation*}
where the coefficients $a_n=-\frac{\Gamma(\alpha(n+1))}{\Gamma(\mathfrak a(n+1))} $
and $b_n= -\frac{\Gamma(\alpha(n+1-\frac{1}{\mathfrak a}))}{\Gamma(\mathfrak a(n+1-\frac{1}{\mathfrak a}))} $ define two entire power series.

\subsection{A Cox-renewal process}\label{sec:cox}

We assume here  that $X$ is a compound Poisson process with intensity $\lambda>0$ and positive exponentially distributed jumps with parameter $p>0$, i.e.~$X$ is a subordinator without drift, i.e.~$\mathrm{d}_X=0$, and L\'evy measure $\Pi(d y) = \lambda p e^{-p y}\mathbf 1_{\{y>0\}} d y$, which implies that $\Psi(z)=\frac{i\lambda z}{p-iz}$, $\Im(z)>-p$.
As indicated in Section \ref{sec_intro}, a Poisson process time-changed by $\ell$ is a renewal process and so $\X$ is a compound renewal process with positive exponentially distributed jumps. If $\Sb$ is just a drift, i.e.~$\Sb_t=\dt t$ with $\dt>0$, then $\T_a^{(\Sb)}$ corresponds to the ruin time in the so-called renewal risk or Sparre-Andersen model with exponentially distributed claims, initial capital $a$, premium rate $\dt$ and interarrival distribution whose Laplace transform is given by \eqref{assump_hold} with $\phi=\phi_\Sub$.
With $\Sb$ a general non-zero subordinator, $\T_a^{(\Sb)}$ can be seen as the ruin time of a risk process where the size and arrival of the claims are as in the aforementioned renewal risk model but the inflow of capital is more general than a deterministic premium rate as it includes random capital injections that are modelled by the jumps of $\Sb$.
Within this setting we have that $\Psi_{\Ikea}$ given by \eqref{eq:Psidef} is the characteristic exponent of a L\'evy process with non-monotone sample paths whose L\'evy measure coincides with $\Pi$ on the positive half-line and so we can use  \cite[Theorem 2.2]{Lewis-Mordecki-2008} to get,
\begin{equation}\label{WH_renewal_example}
\Phi_\Ikea(\varrho;i u) = \frac{u+p}{p} \frac{ R(\varrho)}{u+ R(\varrho)}, \quad \varrho>0 \ \text{and} \ u,q\geq 0,
\end{equation}
where $ R(\varrho)$ is the unique positive solution to $\Psi_\Ikea(-i R(\varrho)) =\varrho$, see \cite[Lemma 1.1]{Lewis-Mordecki-2008}). Hence by Theorem \ref{thm2}, for $u,v,q >0$ with $u\neq v$,
\begin{equation*}
\begin{split}
\int_0^\infty e^{-u a } \E\left[ e^{ -q\T_a^{(\Sb)} - v ( \X^{(\Sb)}_{\T_a^{(\Sb)}} -a ) } \right] d a = &
\frac{1}{u-v} \left( 1 -  \frac{u+p}{u+ R(\phi_\Sub(q))}  \frac{v+ R(\phi_\Sub(q))}{v+p}    \right) \\
= &   \frac{p- R(\phi_\Sub(q))}{p+v} \frac1{u+ R(\phi_\Sub(q))}.
\end{split}
\end{equation*}
By Laplace inversion, we get for $a\geq 0$, $q>0$ and $y>0$,
\begin{equation}\label{example_P-R}
\E\left[ e^{ -q\T_a^{(\Sb)}  } \mathbb{I}_{\{   \X^{(\Sb)}_{\T_a^{(\Sb)}} -a  \in   d y\}} \right]  =  \frac{p- R(\phi_\Sub(q))}{p} e^{- R(\phi_\Sub(q)) a} p e^{-p y} d y.
\end{equation}
We see that, for any $a\geq 0$, the overshoot $\X^{(\Sb)}_{\T_a^{(\Sb)}} -a$ %
  is exponentially distributed with parameter $p$ and independent of the first passage time $\T_a^{(\Sb)}$, a property which was known for  the corresponding  L\'evy  process with positive exponential jumps and carries over for the time-changed version. Note that the L\'evy process  $\Xbs$ as defined in \eqref{levy_XbK} below is the one with characteristic exponent  $\Psi_{{\Sub_{\Sb}^{\triangleright 0}}}$. Since $\mathbb E[\Xbs_1]=\mathbb E[X_1]-\mathbb E[\Sb_{\Sub_t}]=\frac\lambda p-\phi'_\Sub(0)\phi'_\Sb(0)\in[-\infty,\infty)$, the L\'evy process $\Xbs$ drifts to $-\infty$  if and only if $\phi'_\Sub(0)\phi'_\Sb(0)>\frac\lambda  p$, see  \cite[theorems 36.5 and 36.6]{Sato1999}. If this is the case, then \eqref{WH_renewal_example} also holds for $p=q=0$ and $u\geq 0$, see again \cite{Lewis-Mordecki-2008}, and so by Theorem \ref{thm2} the ruin probability is given by
\begin{equation}\label{example_ruinprob}
\mathbb P(\T_a^{(\Sb)}<\infty) = \frac{ p- R(0)}{ p} e^{- R(0) a}, \quad a\geq 0.
\end{equation}
To make the link with some of the existing literature, we consider the so-called fractional Poisson risk model with exponential claims for which $\phi_\Sb(u)=\dt u,  \dt>0$ and $\phi_\Sub(u)=u^\alpha, \alpha\in(0,1)$, which implies that the inter-arrival distribution is given by \eqref{mittag-leffler_distr}. In that case $\Xbs$ always drifts to $-\infty$ as $\phi_\Sub'(0)=\infty$ and for any $q\geq 0$, $R_q:= R(\phi_\Sub(q))$ is the unique positive solution to
\begin{equation*}
\frac{\lambda R_q}{ p-R_q}   - (\dt R_q+q)^\alpha  = 0.
\end{equation*}
We see that \eqref{example_P-R} and \eqref{example_ruinprob} are consistent with  \cite[Propositions 2 and 3]{Biard-Saussereau-2014} and  \cite[Example 4.3]{rockpaper}, though note there is a typo  in \cite[ Proposition 2]{Biard-Saussereau-2014}, namely the left-hand side of the main identity there equals one minus the right-hand side.

\section{Proofs} \label{sec:proof}

In the proofs below $\mathcal B(\mathbb R^p)$ denotes  the Borel $\sigma$-algebra on $\mathbb R^p, p\in \mathbb N$.
We recall %
that a filtration  $(\mathcal G_t)_{t\geq 0}$ is called right-continuous if $\cap_{\epsilon>0}\mathcal G_{t+\epsilon}=\mathcal G_t$ for all $t\geq 0$.

\subsection{Proof of Proposition   \ref{prop:regenerative}}\label{sec:proofprop1}
As announced after Proposition \ref{prop:regenerative}, we state and proof the following more general version, where $\Sub$  still satisfies the condition \ref{eq:ass}, that is it has increasing sample paths.
\begin{prop}\label{prop:reg}
 Proposition \ref{prop:regenerative} holds with $\X$ in   \eqref{def:reglevy}  being defined from  a two-dimensional L\'evy process  $(\Lev,\Sub)$ with possibly dependent components.
 \end{prop}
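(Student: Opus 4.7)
My plan is to reduce the semi-regenerative property to the strong Markov property of the bivariate L\'evy process $(X,\Sub)$ applied at the random time $\ell_T$, exploiting that $\Sub_{\ell_T}=T$ whenever $T\in\Rs$. The strong Markov property at this time uses only that $(X,\Sub)$ is L\'evy, not that its components are independent, which is precisely what makes the generalization to dependent components in the present proposition routine once the independent case is understood.

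Before invoking the strong Markov property I would dispatch the filtration-level assertions. The relation $\{\ell_t\leq s\}=\{\Sub_s\geq t\}$---valid because $\Sub$ is right-continuous and, by assumption~\eqref{eq:ass}, a.s.~strictly increasing---shows each $\ell_t$ is an $\mathcal F^\P$-stopping time, so $\widetilde{\mathcal F}_t=\mathcal F^\P_{\ell_t}$ is well-defined; continuity of $\ell$ (another consequence of strict increase of $\Sub$) combined with the right-continuity of the augmented natural filtration $\mathcal F^\P$ of the L\'evy process $(X,\Sub)$ yields $\cap_{\varepsilon>0}\widetilde{\mathcal F}_{t+\varepsilon}=\widetilde{\mathcal F}_t$, and adaptedness of $\X_t=X_{\ell_t}$ is automatic. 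A time-change bookkeeping argument, using the identity $\{\ell_T\leq s\}=\{T\leq\Sub_s\}$ expanded via rational thresholds together with $\{T\leq t\}\in\mathcal F^\P_{\ell_t}$, gives that $\tau:=\ell_T$ is an $\mathcal F^\P$-stopping time and $\widetilde{\mathcal F}_T\subseteq\mathcal F^\P_\tau$.

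Now on $\{T<\infty\}$, hence $\{\tau<\infty\}$, I apply the strong Markov property of $(X,\Sub)$ at $\tau$: the shift $(X'_s,\Sub'_s):=(X_{\tau+s}-X_\tau,\Sub_{\tau+s}-\Sub_\tau)_{s\geq 0}$ is independent of $\mathcal F^\P_\tau$ with the law of $(X,\Sub)$. Because $[T]\subseteq\Rs$ means $\Sub_\tau=T$, for $u\leq\tau$ one has $\Sub_u\leq T<T+t$, so the infimum defining $\ell_{T+t}$ is attained for $u>\tau$ and
\begin{equation*}
\ell_{T+t}=\tau+\inf\{v>0:\Sub'_v>t\}=\tau+\ell'_t,
\end{equation*}
where $\ell'$ is the right-inverse of $\Sub'$. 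Therefore $\X_{T+t}-\X_T=X'_{\ell'_t}=:\X'_t$ is the subdiffusive process built from $(X',\Sub')$, which by the above is independent of $\widetilde{\mathcal F}_T\subseteq\mathcal F^\P_\tau$ and equal in law to $\X$ under $\P$. This gives~\eqref{eq:SMPL}; the displayed semi-regenerative conditional expectation follows by decomposing $\X_{T+t}=\X_T+(\X_{T+t}-\X_T)$, using that $\X_T$ is $\widetilde{\mathcal F}_T$-measurable, and applying the spatial homogeneity $\P_x(\X_t\in\cdot)=\P(\X_t+x\in\cdot)$ from the introduction. The step I expect to be the main nuisance is the stopping-time transfer $T\mapsto\ell_T$: because $\Sub$ has jumps, the standard continuous time-change theorems do not apply verbatim, and the boundary case $T=\Sub_s$ in $\{\ell_T\leq s\}=\{T\leq\Sub_s\}$ must be handled carefully---this is exactly where $[T]\subseteq\Rs$, i.e.~$\Sub_\tau=T$, enters both for the stopping-time transfer and, crucially, for the gluing above.
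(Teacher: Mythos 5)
Your proposal is correct and takes essentially the same route as the paper: transfer $T$ to the $(\mathcal F^\P_t)_{t\geq0}$-stopping time $\Lt_T$ with $\widetilde{\mathcal{F}}_T\subseteq\mathcal F^\P_{\Lt_T}$, apply the strong Markov property of the bivariate L\'evy process $(\Lev,\Sub)$ at $\Lt_T$, and use $\Sub_{\Lt_T}=T$ (i.e.~$[T]\subseteq\Rs$) to identify the post-$T$ increments of $\X$ with the subdiffusive process built from the shifted pair. The only real difference is presentational: your pathwise identity $\Lt_{T+t}=\Lt_T+\Lt'_t$ plus functional measurability replaces the paper's explicit finite-dimensional-distribution computation, which is a legitimate shortcut encoding the same key fact.
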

\begin{proof}
	Since $(\Lev,\Sub)$ is a L\'evy process, the filtration $(\mathcal F^\P_t)_{t\geq 0}$ is right-continuous, see e.g.~\cite[Proposition I.4]{Bertoin-96}. Moreover, the fact that  $\Sub$ is adapted to $(\mathcal F^\P_t)_{t\geq 0}$ entails that for each $t\geq 0$, $\Lt_t$ is a stopping time with respect to $(\mathcal F^\P_t)_{t\geq 0}$, see e.g.~\cite[Lemmas 7.6 and 7.2]{Kallenberg2001}. Hence for any $t\geq 0$, $\widetilde{\mathcal{F}}_t=\mathcal F^\P_{\Lt_t}$ is well-defined. As $\Lt$ is non-decreasing, it is easy to see that $(\widetilde{\mathcal{F}}_t)_{t\geq 0}$ is a filtration. Since $\Lev$ is adapted to $(\mathcal F_t)_{t\geq 0}$ and has right-continuous sample paths, for each $t\geq 0$, $\X_t=\Lev_{\Lt_t}$ is $\widetilde{\mathcal{F}}_t$-measurable, see e.g.~\cite[p.~122]{Kallenberg2001}. Using again that $\Lt$ is non-decreasing and continuous, we observe that for any $A\in\cap_{\epsilon>0}\widetilde{\mathcal{F}}_{t+\epsilon}$ and $s\geq 0$,
	$$A\cap\{\ell_t\leq s\}= \bigcap_{n\geq 1}\bigcup_{m\geq 1} A\cap \{\ell_{t+1/m}\leq s+1/n\} \in\bigcap_{n\geq 1}\mathcal F^\P_{s+1/n}=\mathcal F^\P_s,$$
	which shows that $A\in\widetilde{\mathcal{F}}_t$ and thus the filtration $(\widetilde{\mathcal{F}}_t)_{t\geq 0}$ is right-continuous.
		In order to prove the second statement, let $T$ be an $(\widetilde{\mathcal{F}}_t)_{t\geq 0}$-stopping time  such that its graph $[T] \subseteq \Rs$ a.s.~and satisfying $\mathbb P(T<\infty)>0$.
	We first show that $\Lt_{T}$ is also an $(\mathcal F^\P_t)_{t\geq 0}$-stopping time. If $T$ has a countable or finite range, say $\{s_1,s_2,\ldots\}$, then, for any $t\geq 0$, $\{\Lt_T\leq t\}=\cup_{k\geq 1}\{T=s_k\}\cap \{\Lt_{s_k}\leq t\}\in\mathcal F^\P_t$ since $T$ is an $(\widetilde{\mathcal{F}}_t)_{t\geq 0}$-stopping time and thus  $\Lt_{T}$ is an $(\mathcal F^\P_t)_{t\geq 0}$-stopping time. The general case then follows by an approximation of $T$ by a sequence of non-increasing stopping times with a countable or finite range combined with the facts that $\ell$ is non-decreasing and both $\Lt$ and $(\mathcal F^\P_t)_{t\geq 0}$ are  right-continuous, see \cite[Lemmas 7.3(ii) and 7.4]{Kallenberg2001}. By the same arguments one can show that $\mathcal F^\P_{\Lt_T}\supset\widetilde{\mathcal{F}}_T$.
		Next, since  $(\Lev,\Sub)$ is a L\'evy process it follows, for any $t\geq 0$, that the process $(\Lev_{t+s}-\Lev_t,\Sub_{t+s}-\Sub_t)_{s\geq 0}$ is independent of $\mathcal F_t$. Since $\mathcal N$ consists of null-events the aforementioned process is also independent of $\mathcal F_t\cup \mathcal N$ and since $\mathcal F_t\cup \mathcal N$ is closed under finite intersections, it follows  by a monotone class argument  that  $(\Lev_{t+s}-\Lev_t,\Sub_{t+s}-\Sub_t)_{s\geq 0}$ is independent of $\mathcal F^\P_t$. %
 Hence,  $(\Lev,\Sub)$  is also a (two-dimensional) L\'evy process with respect to $(\mathcal F^\P_t)_{t\geq 0}$.
		Now, let $n\geq 1$ and $t_i,r_i,a_i\geq 0$, $B_i\in\mathcal B(\mathbb R)$, $i=1,\ldots,n$ and  $H\in\mathcal F^\P_{\Lt_T}$.
	Then, writing $\mathbf{B}_n=\cap_{i=1}^n   \{\Lev_{r_i+\Lt_T}-\Lev_{\Lt_T}\in B_i, \Lt_{t_i + T}-\Lt_T>a_i \} \cap H$ and simply $\mathbb P^T(\cdot)=\mathbb P(\cdot|T<\infty)$, we have
	\begin{eqnarray}\label{indep_X_L_H}
	\mathbb P^T (  \mathbf{B}_n) &  = & \mathbb P^T  \left( \cap_{i=1}^n \{\Lev_{r_i+\Lt_T}-\Lev_{\Lt_T}\in B_i, \Sub_{a_i+\Lt_T} < T + t_i\} \cap H  \right) \nonumber \\
	& = & \mathbb P^T  \left( \cap_{i=1}^n \{\Lev_{r_i+\Lt_T}-\Lev_{\Lt_T}\in B_i, \Sub_{a_i+\Lt_T} -\Sub_{\Lt_T}< t_i\} \cap H \right) \nonumber  \\
	& = & \mathbb P  \left( \cap_{i=1}^n \{\Lev_{r_i}\in B_i, \Sub_{a_i}< t_i\} \right) \mathbb P^T(H)\nonumber  \\
	& = & \mathbb P  \left( \cap_{i=1}^n \{\Lev_{r_i}\in B_i, \Lt_{t_i}> a_i\} \right) \mathbb P^T(H),
	\end{eqnarray}
	where we used for the first and last equality  the fact that $\Sub$ is increasing to get that $\{\Lt_a>t\}=\{\Sub_t<a\}$ for any $a,t\geq 0$, for the second one that $\mathbb P(T\in\Rs\cup\{\infty\})=1$, whereas for the third one the strong Markov property for L\'evy processes (see e.g.~\cite[Proposition I.6]{Bertoin-96}) which  can be applied as  $(\Lev,\Sub)$  is a L\'evy process with respect to $(\mathcal F^\P_t)_{t\geq 0}$, $\Lt_T$ is an $(\mathcal F^\P_t)_{t\geq 0}$-stopping time and $\{T<\infty\}=\{\Lt_T<\infty\}$. %
	Equation \eqref{indep_X_L_H} with $H=\Omega$ shows that $(\Lev_{t+\Lt_T}-\Lev_{\Lt_T}, \Lt_{t+T}-\Lt_T)_{t\geq 0}$ under $\mathbb P^T$ has the same finite-dimensional distributions, and thus law, as $(\Lev,\Lt)$ under $\mathbb P$. Then using this established fact in \eqref{indep_X_L_H}, we further conclude that $(\Lev_{t+\Lt_T}-\Lev_{\Lt_T}, \Lt_{t+T}-\Lt_T)_{t\geq 0}$ and the $\sigma$-algebra $\mathcal F^\P_{\Lt_T}$ are independent under $\mathbb P^T$.
	Consequently, writing for some $n\geq 1, \mathds B_n=\cap_{i=1}^n \{\X_{t_i + T}-\X_T\in B_i\} \cap H$, we get
	\begin{eqnarray*}%
	\mathbb P^T (\mathds B_n ) \nonumber& = &
	 \int_{[0,\infty)^{n}} \mathbb P^T \left( \cap_{i=1}^n \{\Lev_{\Lt_T+r_i}-\Lev_{\Lt_T}\in B_i\}\cap H | \cap_{i=1}^n \{ \Lt_{t_i + T}-\Lt_T=r_i \} \right)\nonumber \\
	& & \times \:  \mathbb P^T( \cap_{i=1}^n \{ \Lt_{t_i + T}-\Lt_T\in d r_i \}   ) \nonumber \\
	& = &  \int_{[0,\infty)^n} \mathbb P \left( \cap_{i=1}^n \{\Lev_{r_i}\in B_i\} | \cap_{i=1}^n\{ \Lt_{t_i} = r_i \}
	\right)  \mathbb P^T(H) \mathbb P( \cap_{i=1}^n\{ \Lt_{t_i}\in d r_i \}) \nonumber\\
	& = & \mathbb P \left( \cap_{i=1}^n\{\X_{t_i}\in B_i\} \right) \mathbb P^T(H).
	\end{eqnarray*}
	From this equation we can conclude that under $\mathbb P^T$ the process $(\X_{T+t}-\X_T)_{t\geq 0}$ is independent of $\mathcal F^\P_{\Lt_T}\supset\widetilde{\mathcal{F}}_T$  and has the same law as $\X$ under $\mathbb P$. %
Finally, observe, since $\ell$ is continuous, that $\Rs=\{t\geq 0; \: \Sub_{\ell_t}=t  \}=\{t\geq 0; \: \Sub_y=t \ \text{for some $y\geq 0$}\}$, that is $\Rs$ is the range of the subordinator $\Sub$ and thus it is a regeneration set in the sense of \cite[Chap.~2]{Maison}.  Next, let  $T$ be an $(\widetilde{\mathcal{F}}_t)_{t\geq 0}$-stopping time  such that  $[T] \subseteq \Rs$,   $G$ a positive and $\mathcal{F}^0=\sigma(\X_t, t\geq 0)$-measurable function %
then, for any $t\geq0$,
\begin{eqnarray*}
\E\left[ G(\X_{t+T}) | \widetilde{\mathcal{F}}_T\right]& = & \E\left[ G(\X_{t+T}-\X_{T}+\X_{T}) | \widetilde{\mathcal{F}}_T\right] \\& = & \int_{\R}\E\left[ G(\X_{t+T}-\X_{T}+y) | \widetilde{\mathcal{F}}_T\right] \P(X_T\in dy) \\& = & \int_{\R}\E\left[ G(\X_{t}+y)\right] \P(X_T\in dy) \quad \P\textrm{-a.s. on } \{T<\infty\} \\& = & \E_{\mathds X_{T}}\left[ G(\X_{t})\right] \quad \P\textrm{-a.s. on } \{T<\infty\},
\end{eqnarray*}
where we used for the second identity that $X_T$ is $\widetilde{\mathcal{F}}_T$-measurable, for the third one the regenerative property of L\'evy type \eqref{eq:SMPL} of $\X$ and finally that $\E\left[ G(\X_{t}+y)\right]=\E_y\left[ G(\X_{t})\right]$, which completes the proof.
\end{proof}

\subsection{Proof of Theorem  \ref{thm1}} \label{sec:proofmain}
The proof of  Theorem  \ref{thm1} is split into several intermediate steps which may be of independent interests. The plan behind the proof is to reduce, via the time change and a change of measure, the problem of characterizing the distribution of the first passage time $\T_a^{(\Sb)}$ of $\X$ over the stochastic boundary $a+\Sb_t$ to the one involving only first passage times of L\'evy processes over the constant boundary $a$.
We remark that the proof is substantially easier in the special case where the boundary $a+\Sb_t$ is a constant, i.e.~$\Sb=0$. Indeed, in that case both the second half of Lemma \ref{lem:T=T} as well as Lemma \ref{lem_levypair} below are trivial (whereas the first half of Lemma \ref{lem:T=T} is not needed), Lemma \ref{lem:T=kT} is somewhat easier to prove and for the final step one can use the independence of the processes involved instead of resorting to a change of measure. We emphasize that the case $\Sb\neq 0$ does not readily follow  from the case $\Sb=0$  since the process $\X^{(\Sb)}$, which recall is defined by $\X^{(\Sb)}_t=\X_t-\Sb_t$, is not in general a L\'evy process time-changed by $\ell$. Though we will show that we can study $\T_a^{(\Sb)}$ by considering a particular time-changed L\'evy process, it  is interesting to note that the case $\Sb\neq0$ creates a dependence  between the time change process and the underlying L\'evy process, provided we are in the non-trivial case where the subordinator $\Sub$ has jumps.
We also point out  that, in lemmas \ref{lem:T=T} and \ref{lem:T=kT}, the fact that the L\'evy process $\Sb$ has non-decreasing sample paths is crucial.

We start the proof by introducing two additional processes, namely %
the process  $\Xbs=(\Xbs_t)_{t\geq 0}$ defined by
\begin{equation}\label{levy_XbK}
\Xbs_t %
=\Lev_t-\Sb_{\Sub_t}, \quad t\geq 0,
\end{equation}
which is easily seen to be  a L\'evy process with characteristic exponent $\Psi_{{\Sub_{\Sb}^{\triangleright 0}}}$ (see Section 30 of \cite{Sato1999} or Lemma \ref{lem_levypair} below)
 and the time-changed L\'evy process $\X^{(\Sb_{\Sub_\Lt})}= ( \X^{(\Sb_{\Sub_\Lt})}_t )_{t\geq 0}$ defined by
 \begin{equation*}
 \X^{(\Sb_{\Sub_\Lt})}_t =  \Xbs_{\Lt_t} %
 =\X_{t}-\Sb_{\Sub_{\Lt_t}}. %
 \end{equation*}
Further, for any process $Z=(Z_t)_{t\geq 0}$, we denote, from now on, by $\T_a(Z)$  its first passage time above the level $a$, that is
 \begin{equation*}
 \T_a(Z) = \inf\{t>0; \: Z_t>a\}.
 \end{equation*}
 Observe that $\T_a^{(\Sb)}=\ST{\T}{\Xb}$. %
 When $\Sb\neq 0$ and the subordinator $\Sub$ has jumps, the processes $\Xb$ and $\X^{(\Sb_{\Sub_\Lt})}$ are not equal (unless $\Sub$ has no jumps), but clearly we do have that $\Xb_T=\X^{(\Sb_{\Sub_\Lt})}_T$ for any random variable $T$ such that $[T] \subseteq \Rs$ where we recall  that $\Rs$ is the regeneration set  defined in \eqref{regen_set}.
 The next lemma shows that $[\ST{\T}{\Xb}] \subseteq \Rs$, provided $\ST{\T}{\Xb}<\infty$, which implies that $\ST{\T}{\Xb}=\ST{\T}{\X^{(\Sb_{\Sub_\Lt})}}$ $\P$-a.s.~and so we can reduce the problem of characterizing the law of $\ST{\T}{\Xb}$ to that of the first passage time of a (dependently) time-changed L\'evy process.

\begin{lem}\label{lem:T=T}
We have, for any $a\geq 0$, $[\ST{\T}{\Xb}] \subseteq \Rs$ $\P$-a.s.~and consequently $\ST{\T}{\Xb}=\ST{\T}{\X^{(\Sb_{\Sub_\Lt})}}$ $\P$-a.s.
\end{lem}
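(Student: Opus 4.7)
The plan is to prove that if $T := \ST{\T}{\Xb} < \infty$ then $T \in \Rs$ almost surely, from which the identity $\ST{\T}{\Xb} = \ST{\T}{\X^{(\Sb_{\Sub_\Lt})}}$ will follow easily. The structural fact I would use, already recorded in the proof of Proposition \ref{prop:reg}, is that $\Rs$ is exactly the range $\{\Sub_u : u \geq 0\}$ of the subordinator $\Sub$, so its complement in $[0,\infty)$ is the countable disjoint union of the half-open intervals $[\Sub_{u-}, \Sub_u)$ indexed by the jump times $u$ of $\Sub$ with $\Sub_{u-} < \Sub_u$. On each such gap $\Lt$ is constant equal to $u$, and hence $\X_\cdot \equiv \Lev_u$. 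Arguing by contradiction, suppose $T < \infty$ and $T \notin \Rs$, so that $T$ lies in some gap $[\Sub_{u-}, \Sub_u)$.

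If $T \in (\Sub_{u-}, \Sub_u)$, then $\Xb_s = \Lev_u - \Sb_s$ throughout the gap, and since $\Sb$ is non-decreasing, $\Xb$ is non-increasing on that gap. Thus $\Xb_s \geq \Xb_T > a$ for every $s \in (\Sub_{u-}, T)$, which forces $\inf\{t > 0 : \Xb_t > a\} \leq \Sub_{u-} < T$, contradicting the definition of $T$. If instead $T = \Sub_{u-}$, continuity of $\Lt$ together with $\Lt_s < u$ for $s < T$ and $\Lt_T = u$ give $\X_{T-} = \Lev_{u-}$ and $\X_T = \Lev_u$, so $\Delta \X_T = \Delta \Lev_u$. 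Since $\Xb_s \leq a$ for $s < T$ while $\Xb_T > a$, the process $\Xb$ must have a strict upward jump at $T$; but $\Delta \Xb_T = \Delta \Lev_u - \Delta \Sb_T$ with $\Delta \Sb_T \geq 0$, so we would need $\Delta \Lev_u > 0$, meaning $\Lev$ jumps at the $\Sub$-jump time $u$. Independence of $\Lev$ and $\Sub$ makes this a null event by the standard fact (a short Fubini argument against their Poisson jump measures) that two independent L\'evy processes almost surely have disjoint jump sets.

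Having established $[T] \subseteq \Rs$ almost surely, the pointwise identity $\Xb_t - \X^{(\Sb_{\Sub_\Lt})}_t = \Sb_{\Sub_{\Lt_t}} - \Sb_t$ together with $\Sub_{\Lt_t} \geq t$ and the monotonicity of $\Sb$ yield $\Xb \geq \X^{(\Sb_{\Sub_\Lt})}$ everywhere, so $\ST{\T}{\X^{(\Sb_{\Sub_\Lt})}} \geq \ST{\T}{\Xb}$; and because $T \in \Rs$ forces $\Sub_{\Lt_T} = T$ and hence $\X^{(\Sb_{\Sub_\Lt})}_T = \Xb_T > a$, the reverse inequality also holds. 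The main obstacle in this plan is the second sub-case $T = \Sub_{u-}$, which rests on the delicate measure-theoretic statement about a.s.~disjointness of jump sets for independent L\'evy processes; the other sub-case and the consequence are essentially immediate from the monotonicity structure of $\Xb$ on the gaps.
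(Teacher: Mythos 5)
Your first half — showing $[\ST{\T}{\Xb}]\subseteq\Rs$ by decomposing the complement of $\Rs$ into the gaps $[\Sub_{u-},\Sub_u)$ and ruling out the interior of a gap and its left endpoint separately — is essentially the paper's argument (the paper phrases it via excursion intervals of $\Sub_{\Lt_t}-t$, but the ingredients are the same: $\Xb$ is non-increasing on each gap, and $\Lev$ and $\Sub$ a.s.\ have no common jump times). One small elision there: in both sub-cases you assert $\Xb_{\ST{\T}{\Xb}}>a$, which at a first passage time is not automatic; it does hold when the passage time lies in a gap, precisely because $\Xb$ is non-increasing on the gap so the level cannot be exceeded immediately to the right, but this needs to be said.

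The genuine gap is in the "consequently" step. Writing $T=\ST{\T}{\Xb}$, you deduce the reverse inequality from "$T\in\Rs$ forces $\X^{(\Sb_{\Sub_\Lt})}_T=\Xb_T>a$". The equality $\X^{(\Sb_{\Sub_\Lt})}_T=\Xb_T$ is correct, but $\Xb_T>a$ is false in general: whenever $\Xb$ crosses the level continuously (e.g.\ $\Lev$ with a Gaussian component, or any situation where $\Xb$ has no positive jumps, such as $\Lev$ a Brownian motion and $\Sb\neq 0$), one has $\Xb_T=a$ and the passage occurs only through times immediately after $T$; those times need not lie in $\Rs$, and on the gaps $\X^{(\Sb_{\Sub_\Lt})}$ is strictly below $\Xb$ as soon as $\Sb$ increases over the corresponding jump interval of $\Sub$, so nothing immediate forces $\X^{(\Sb_{\Sub_\Lt})}$ to exceed $a$ at or right after $T$. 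This boundary case $\Xb_T=a$ is exactly what the paper's proof spends its second half on: it considers the event $A=\{\ST{\T}{\Xb}<\ST{\T}{\X^{(\Sb_{\Sub_\Lt})}}\}$, observes that on $A$ one must have $\Xb_T=a$, and then shows $\P(A)=0$ by splitting according to whether the L\'evy measure of $\Sub$ has finite or infinite mass, using in the latter case that $\Rs$ has no isolated points from the right to transfer the problem to the L\'evy process $\Xbs$, for which hitting the level $a>0$ strictly before exceeding it with positive probability would force its ascending ladder height process to be compound Poisson with an atom, leading to a contradiction (up to a degenerate compound Poisson scenario that is excluded separately). So the obstacle you flag (disjointness of jump sets) is the routine part; the real missing piece is this $\Xb_T=a$ case, and without it the lemma is only proved when $\Sb=0$, where the two processes coincide and the second claim is trivial.
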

\begin{proof}
	Note that  throughout  this proof all identities/inequalities are in the $\P$-a.s.~sense and for sake of clarity we shall not mention it.  Since $\Sub_{\Lt_t}\geq t$ and $\Sb$ has non-decreasing sample paths we have
	$\Xb_t - \X^{(\Sb_{\Sub_\Lt})}_t = \Sb_{\Sub_{\Lt_t}} - \Sb_t \geq 0$. This implies that %
	$\ST{\T}{\Xb}\leq \ST{\T}{\X^{(\Sb_{\Sub_\Lt})}}$ and we can therefore assume,  without loss of generality, %
	that $\ST{\T}{\Xb}<\infty$.
		We next prove that $[\ST{\T}{\Xb}] \subseteq  \Rs$.
 To this end, let us introduce the process $\Sub^{-}_\Lt=(\Sub^{-}_{\Lt_t}=\Sub_{\Lt_t}-t)_{t\geq 0}$. According to \cite[Exercise IV.6.2]{Bertoin-96}, $\Sub^{-}_\Lt$ is a recurrent Markov process %
 with $0$ as a recurrent and regular point and  $\Lt$ is, up to a normalizing constant, its local time at $0$. Its zero set is equal to $\Rs$ and coincides with the range of $\Sub$, i.e.~$\Rs=\{t\geq 0; \: \Sub_y=t \ \text{for some $y\geq 0$}\}$.  %
	Let $(\g,\d)$ with $0<\g<\d$ be an excursion interval by which we mean that $\Sub^{-}_\Lt>0$ for all $t\in(\g,\d)$ and there exists no larger open interval on which $\Sub^-_\Lt$ is (strictly) positive. Note that the number of excursion intervals is countable, the union of all excursion intervals equals $[0,\infty)\setminus\overline{\Rs}$, where $\overline{\Rs}$ denotes the closure of $\Rs$ and $\overline{\Rs}\setminus{\Rs}$ consists precisely of the left-endpoints of the excursion intervals or equivalently the jump times of $\Sub_\Lt=(\Sub_{\Lt_t})_{t\geq 0}$, see e.g.~\cite[Section 1.4]{Bertoin_sub_lec}.
	Since $\Sb$ has non-decreasing sample paths and $\Lt$ is constant on $(\g,\d)$ and thus on $[\g,\d]$ by continuity, we have $\Xb_t\leq \Xb_\g$ for any $t\in[\g,\d]$. Hence $\mathbb P(\ST{\T}{\Xb}\in (\g,\d])=0$, which implies that $[\ST{\T}{\Xb}] \subseteq \overline{\Rs}$. Further, denoting $\Delta Z_t=Z_t-\lim_{s\uparrow t}Z_s$, we have
	\begin{eqnarray*}
		\mathbb P(\ST{\T}{\Xb} = \g)
		&=&\mathbb P(\ST{\T}{\Xb} =\g, \Delta \X_\g = 0) \\
		&=&\mathbb P(\ST{\T}{\Xb} =\g,\Delta \Xb_\g \leq 0)\\
		&=&\mathbb P(\ST{\T}{\Xb} =\g, \Xb_{\g}=a) \\
		&=&0,
	\end{eqnarray*}
where (i) the first equality follows because $\g$ is a jump time of $\Sub_\Lt$ and thus $\Lt_\g$ is a jump time of $\Sub$ by continuity of $\Lt$ and so since $\Sub$ and $\Lev$ jump %
at different times by independence, $\Lt_\g$ is not a jump time of $\Lev$ and thus $\g$ is not a jump time of $\X$ by continuity of $\Lt$, (ii) the second equality follows  because  $\Delta \Sb_\g\leq 0$ and (iii) the last equality is due to $\Xb_t\leq \Xb_\g$ for $t\in[\g,\d]$. We conclude that $[\ST{\T}{\Xb}] \subseteq \Rs$.
	As  $\ST{\T}{\Xb}=\ST{\T}{\X^{(\Sb_{\Sub_\Lt})}}$ for all $a>0$ implies the equality for $a=0$ by right-continuity of $\ST{\T}{\Xb}$ and $\ST{\T}{\X^{(\Sb_{\Sub_\Lt})}}$ in $a$, we can assume without loss of generality that $a>0$.
Now consider the event $A=\{\ST{\T}{\Xb} < \ST{\T}{\X^{(\Sb_{\Sub_\Lt})}}\}$. As we already showed $\ST{\T}{\Xb}\leq \ST{\T}{\X^{(\Sb_{\Sub_\Lt})}}$, the proof is finished once we show that $\mathbb P(A)=0$.
As $[\ST{\T}{\Xb}] \subseteq \Rs$, we have $\Xb_{\ST{\T}{\Xb}}= \X^{(\Sb_{\Sub_\Lt})}_{\ST{\T}{\Xb}}$  %
and thus
we have on $A$, $\Xb_{\ST{\T}{\Xb}}=\X^{(\Sb_{\Sub_\Lt})}_{\ST{\T}{\Xb}}=a$ and there exists $\epsilon>0$ such that $\X^{(\Sb_{\Sub_\Lt})}_{s+ \ST{\T}{\Xb}}\leq a$ for all $s\in[0,\epsilon]$. If the L\'evy measure of $\Sub$ has finite mass, then $\Rs$ consists of intervals that are open from the right and thus there exists $0<\delta<\epsilon$ such that $\Xb_{s+ \ST{\T}{\Xb} }=\X^{(\Sb_{\Sub_\Lt})}_{s+ \ST{\T}{\Xb}}\leq a$ for all $s\in[0,\delta]$ on $A$. By definition of $\ST{\T}{\Xb}$ this cannot happen  and thus $\mathbb P(A)=0$. Hence we assume now without loss of generality that the L\'evy measure of $\Sub$ has infinite mass.	 	
	 	 As $[\ST{\T}{\Xb}] \subseteq \Rs$ %
and $\Rs$ does not have isolated points from the right (since $0$ is regular for the Markov process $\Sub^{-}_\Lt$ as $\Sub$ is not a compound Poisson process by assumption), there exists $s'\in(0,\epsilon]$ such that $[s'+\ST{\T}{\Xb}] \subseteq \Rs$, which implies, by definition of $\Rs$, that $\Lt_{\ST{\T}{\Xb}}<\Lt_{s'+\ST{\T}{\Xb}}$. Hence, since $\X^{(\Sb_{\Sub_\Lt})}_{s+ \ST{\T}{\Xb}}\leq a$ for all $s\in[0,\epsilon]$ and $\Lt$ has continuous and non-decreasing sample paths, $\Xbs_{{t+\Lt_{\ST{\T}{\Xb}}}} \leq a$ for all $t\in \left[0,\Lt_{s'+\ST{\T}{\Xb}}-\Lt_{\ST{\T}{\Xb}}\right]$ on the event $A$.
Thus, if $\mathbb P(A)>0$, then the L\'evy process $\Xbs$ (which recall was defined in \eqref{levy_XbK}) %
must have the property that $\inf\{t>0;\:\Xbs_t=a\}<\inf\{t>0;\:\Xbs_t>a\}$ with (strictly) positive probability. This means,  as we assumed $a>0$, that the ascending ladder height process of $\Xbs$  must be a compound Poisson process with some atom(s) in its L\'evy measure. %
	 	  Following the arguments of  \cite[p.215]{Kyprianou-14}, this can only  be the case when $\Xbs$  is a compound Poisson process. Since $\Sb_\Sub=(\Sb_{\Sub_t})_{t\geq 0}$ and $\Lev$ are independent, this must mean that both $\Sb_\Sub$ and $\Lev$ are the sum of a compound Poisson process and a possible drift. As we could assume that $\Sub$ has a L\'evy measure with infinite mass, $\Sb$ is a compound Poisson process as otherwise the L\'evy measure of $\Sb_\Sub$ would have infinite mass. This also means that $\Sb_\Sub$  has no drift and thus $\Lev$ has no drift as well because otherwise $\Xbs$  is not a compound Poisson process. But if both $\Lev$ and $\Sb$ are compound Poisson processes, then $\Xb$ cannot creep over the level $a$ because it has to stay in a given state for a (strictly) positive amount of time. Hence $\mathbb P(A)\leq \mathbb P(\Xb_{\ST{\T}{\Xb}}=a)=0$ which completes the proof.
\end{proof}

In the following, we express  $\ST{\T}{\X^{(\Sb_{\Sub_\Lt})}}$ in terms of the subordinator $\Sub$ and the first passage time over $a$ of the L\'evy process $\Xbs$ by using that $\X^{(\Sb_{\Sub_\Lt})}$ is a time-changed L\'evy process.
\begin{lem}\label{lem:T=kT}
For any $a\geq 0$,  $\ST{\T}{\X^{(\Sb_{\Sub_\Lt})}} = \Sub_{\ST{T}{\Xbs}}$ $\P$-a.s. where, using \eqref{levy_XbK},  we have set
\begin{equation*}
\ST{T}{\Xbs} = \inf\{t>0;\: \Xbs_t>a\}.
\end{equation*}
\end{lem}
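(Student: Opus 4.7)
The plan is to reduce the claim $\ST{\T}{\X^{(\Sb_{\Sub_\Lt})}} = \Sub_{\ST{T}{\Xbs}}$ a.s.\ to two inequalities, using the time-change identity $\X^{(\Sb_{\Sub_\Lt})}_t = \Xbs_{\Lt_t}$ together with the inverse relationship between $\Sub$ and $\Lt$. Under \eqref{eq:ass} the subordinator $\Sub$ has a.s.\ strictly increasing paths, so $\Lt$ is a.s.\ continuous and non-decreasing with $\Lt_{\Sub_s}=s$ for every $s\geq 0$. On the event $\{\ST{T}{\Xbs}=\infty\}$ one has $\Xbs_s \leq a$ for all $s\geq 0$, forcing $\ST{\T}{\X^{(\Sb_{\Sub_\Lt})}} = \infty = \Sub_{\ST{T}{\Xbs}}$, so I restrict throughout to the event $\{\ST{T}{\Xbs}<\infty\}$.

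The crucial preliminary step, and the main obstacle, is to show that $\Sub$ is almost surely continuous at $\ST{T}{\Xbs}$. I would argue as follows. Independence of $\Lev$ and $\Sub$ entails that their jump sets are a.s.\ disjoint, so at any jump time of $\Sub$ the process $\Sb_\Sub$ experiences a non-negative jump (the increment of $\Sb$ over the interval swept by $\Sub$), while $\Lev$ does not jump; hence $\Delta\Xbs = -\Delta(\Sb_\Sub)\leq 0$ at any jump time of $\Sub$. On the other hand, $\Xbs_{\ST{T}{\Xbs}-}\leq a\leq \Xbs_{\ST{T}{\Xbs}}$ by definition of $\ST{T}{\Xbs}$, which forces $\Delta\Xbs_{\ST{T}{\Xbs}}\geq 0$. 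Combining these, either $\Xbs$ has a strictly positive jump at $\ST{T}{\Xbs}$ (which must then come from $\Lev$, ruling out a jump of $\Sub$ there by disjointness), or $\Xbs$ is continuous at $\ST{T}{\Xbs}$ (so neither $\Lev$ nor $\Sb_\Sub$ jumps there, and in particular $\Sub$ does not).

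Granted this, the inequality $\ST{\T}{\X^{(\Sb_{\Sub_\Lt})}}\geq \Sub_{\ST{T}{\Xbs}}$ is immediate: for $t<\Sub_{\ST{T}{\Xbs}}$, continuity of $\Sub$ at $\ST{T}{\Xbs}$ combined with strict monotonicity of $\Sub$ gives $\Lt_t<\ST{T}{\Xbs}$, hence $\Xbs_{\Lt_t}\leq a$ by definition of $\ST{T}{\Xbs}$. For the reverse inequality I would split into the two cases above. If $\Xbs_{\ST{T}{\Xbs}}>a$, then $\Lt_{\Sub_{\ST{T}{\Xbs}}} = \ST{T}{\Xbs}$, so $\Xbs_{\Lt_{\Sub_{\ST{T}{\Xbs}}}} = \Xbs_{\ST{T}{\Xbs}}>a$ shows that $\Sub_{\ST{T}{\Xbs}}$ already lies in $\{t>0;\: \Xbs_{\Lt_t}>a\}$. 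If $\Xbs_{\ST{T}{\Xbs}}=a$, I would exploit that $\ST{T}{\Xbs}$ is the infimum of $\{s>0;\: \Xbs_s>a\}$ to extract a sequence $s_n\downarrow \ST{T}{\Xbs}$ with $\Xbs_{s_n}>a$; setting $t_n=\Sub_{s_n}$, strict monotonicity of $\Sub$ gives $\Lt_{t_n}=s_n$ and hence $\Xbs_{\Lt_{t_n}}>a$, while $t_n\downarrow\Sub_{\ST{T}{\Xbs}}$ by right-continuity of $\Sub$, yielding $\ST{\T}{\X^{(\Sb_{\Sub_\Lt})}} \leq \Sub_{\ST{T}{\Xbs}}$ in the limit. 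Once the continuity-of-$\Sub$-at-$\ST{T}{\Xbs}$ step is established, the rest amounts to routine manipulation of the time-change inverse, in the same spirit as the arguments used in the proof of Lemma \ref{lem:T=T}.
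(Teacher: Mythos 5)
Your overall strategy (reduce to two inequalities via the inverse relation between $\Sub$ and $\Lt$, treat the overshoot and creeping cases separately) is sound, and indeed the creeping-case argument with $s_n\downarrow \ST{T}{\Xbs}$ and $t_n=\Sub_{s_n}$ works. But the step you yourself flag as the main obstacle contains a genuine gap: in the second branch of your dichotomy you infer from ``$\Xbs$ is continuous at $\ST{T}{\Xbs}$'' that $\Sb_{\Sub}$ does not jump there \emph{and hence $\Sub$ does not jump there}. The last implication is false unless $\Sb$ is strictly increasing: if $\Sb$ has intervals of constancy (e.g.\ $\Sb$ a compound Poisson subordinator, or the explicitly allowed degenerate case $\Sb=0$, where $\Sb_{\Sub}\equiv 0$ never jumps while $\Sub$ jumps at infinitely many times), then $\Sub$ can jump over an interval on which $\Sb$ is constant, producing no jump of $\Sb_{\Sub}$ at all. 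So your claimed ``crucial preliminary step'' --- that $\Sub$ is a.s.\ continuous at $\ST{T}{\Xbs}$ --- is not established by the argument given; it is a strictly stronger statement than the lemma requires, and proving it would need a different idea (for instance splitting off the jumps of the bivariate process $(\Xbs,\Sub)$ of the form $(0,y)$, which are independent of $\Xbs$, and applying a Fubini argument), not the chain ``no jump of $\Sb_{\Sub}$ $\Rightarrow$ no jump of $\Sub$''.

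The good news is that your first two observations already contain everything that is needed, which is how the paper proceeds: at any jump time of $\Sub$ one has $\Delta\Xbs\leq 0$ (disjointness of the jumps of $\Lev$ and $\Sub$, monotonicity of $\Sb$), while $\Xbs_{\ST{T}{\Xbs}-}\leq a\leq \Xbs_{\ST{T}{\Xbs}}$; hence on the event $\{\Delta\Sub_{\ST{T}{\Xbs}}>0\}$ one must have $\Delta\Xbs_{\ST{T}{\Xbs}}=0$ and therefore $\Xbs_{\ST{T}{\Xbs}}=a$. In other words, what is ruled out (with probability one) is only the joint occurrence of a strict overshoot $\Xbs_{\ST{T}{\Xbs}}>a$ and a jump of $\Sub$ at $\ST{T}{\Xbs}$, and that is all the lemma needs: in the remaining scenario where $\Sub$ jumps at $\ST{T}{\Xbs}$ but $\Xbs_{\ST{T}{\Xbs}}=a$, your forward inequality still goes through, because for $t$ in the flat stretch $[\Sub_{\ST{T}{\Xbs}-},\Sub_{\ST{T}{\Xbs}})$ of $\Lt$ one has $\X^{(\Sb_{\Sub_\Lt})}_t=\Xbs_{\ST{T}{\Xbs}}=a$, which is not $>a$, while $\Lt_t<\ST{T}{\Xbs}$ for $t<\Sub_{\ST{T}{\Xbs}-}$. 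Rewriting your proof so that the forward inequality only uses this weaker exclusion closes the gap and brings it essentially in line with the paper's proof, which never asserts continuity of $\Sub$ at $\ST{T}{\Xbs}$.
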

\begin{proof}
	Since the equality trivially holds if  $\ST{T}{\Xbs}=\infty$ as $\Sub_\infty=\infty$,  we assume from now on  that $\ST{T}{\Xbs}<\infty$. Next,	observe, from its definition, that
	\begin{equation*}
	\ST{\T}{\X^{(\Sb_{\Sub_\Lt})}} = \inf\left\{s>0; \:\Lt_s\in\{t>0; \: \Xbs_t>a\}\right\}.
	\end{equation*}
	By right-continuity of $\Sub$ and the definition of $\Lt$, it is not hard to show that for any $t\geq 0$,
	\begin{equation}\label{rightinverse_L}
	\inf\{s>0; \: \Lt_s>t\} = \Sub_t.
	\end{equation}
	These two observations combined with  the continuity of $\Lt$ yield that
	 \begin{equation*}
	 \ST{\T}{\X^{(\Sb_{\Sub_\Lt})}}  =
	 \begin{cases}
	 \inf\{s>0; \: \Lt_s = \ST{T}{\Xbs} \} & \text{if $\Xbs_{\ST{T}{\Xbs}}>a$}, \\
	 \inf\{s>0; \: \Lt_s > \ST{T}{\Xbs} \} = \Sub_{\ST{T}{\Xbs}} & \text{if $\Xbs_{\ST{T}{\Xbs}}=a$}.
	 \end{cases}
	 \end{equation*}
	  It remains to show that $\mathbb P(A)=0$, where
	  \begin{equation*}
	  A= \left\{ \Xbs_{\ST{T}{\Xbs}}>a \ \text{and} \  \inf\{s>0; \: \Lt_s =  \ST{T}{\Xbs} \}\neq \inf\{s>0;\:\Lt_s > \ST{T}{\Xbs}\} \right\}.
	  \end{equation*}
	  Recalling the notation $\Delta Z_t=Z_t-\lim_{s\uparrow t}Z_s$, note that $A\subset\{ \Delta \Sub_{\ST{T}{\Xbs}}>0\}$, i.e. $\ST{T}{\Xbs}$ is a jump time of $\Sub$ on the event $A$. Since $\Lev$ and $\Sub$ are independent, they jump at different times which means that $\Delta X_{\ST{T}{\Xbs}}=0$ on $A$ and in combination with $\Sb$ having non-decreasing sample paths, we  deduce that $\mathbb P( \Delta \Xbs_{ \ST{T}{\Xbs}}\leq 0,A)=\mathbb P(A)$. But since $\{ \Delta \Xbs_{ \ST{T}{\Xbs}}\leq 0\}\cap\{ \Xbs_{ \ST{T}{\Xbs}}>a \}=\emptyset$, we must have $\mathbb P(A)=0$.
\end{proof}

Next we show that the bivariate process $(\Xbs,\Sub)$ is a L\'evy process, which is crucial for defining our change of measure in order to determine the law of $\Sub_{\ST{T}{\Xbs}}$ and thus $\T_a^{(\Sb)}$. In what follows below  $\langle .,.\rangle$ denotes the standard inner product of $\R^2$.
 \begin{lem}\label{lem_levypair}
 	The two-dimensional stochastic process $\bfXs=(\Xbs,\Sub)=(\Xbs_t,\Sub_t)_{t\geq 0}$ is a two-dimensional L\'evy process whose characteristic exponent is given, for any $\mathbf{z}=(z_1,z_2) \in \R^2$  and $t\geq 0$, by
 	\begin{equation*}
 	\mathbb E \left[ e^{i\langle\mathbf{z}, \bfXs_t\rangle}  \right] = e^{ \mathbf{\Psi}(\mathbf{z})t  }
 	\end{equation*}
 where $\mathbf{\Psi}(\mathbf{z}) =  \Psi(z_1)  -\phi_\Sub(\phi_\Sb(iz_1 ) -iz_2)$.%
 \end{lem}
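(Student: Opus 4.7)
The plan is to deduce the L\'evy property of $\bfXs$ from known subordination results, and then verify the characteristic exponent by a direct computation based on conditioning and independence.

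First, I would observe that the two-dimensional process $\mathbf Y=(\mathbf Y_t)_{t\geq 0}$ with $\mathbf Y_t=(t,\Sb_t)$ is trivially a $2$-dimensional L\'evy process. Since $\Sub$ is a subordinator independent of $\Sb$, the classical (multivariate) Bochner subordination result \cite[Theorem 30.1]{Sato1999} gives that $\mathbf Y_{\Sub_\cdot}=(\Sub_t,\Sb_{\Sub_t})_{t\geq 0}$ is again a $2$-dimensional L\'evy process. As $\Lev$ is independent of $(\Sub,\Sb)$, and hence of $(\Sub,\Sb_\Sub)$, the concatenated $3$-dimensional process $(\Lev_t,\Sub_t,\Sb_{\Sub_t})_{t\geq 0}$ is a L\'evy process. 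Finally, $\bfXs=(\Lev-\Sb_\Sub,\Sub)$ is the image of this $3$-dimensional L\'evy process under the linear map $(x,y,z)\mapsto (x-z,y)$, and so $\bfXs$ is itself a $2$-dimensional L\'evy process.

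To identify its characteristic exponent, I would fix $\mathbf z=(z_1,z_2)\in\R^2$ and compute, using the independence of $\Lev$ from $(\Sub,\Sb)$,
\begin{equation*}
\E\!\left[e^{i\langle\mathbf z,\bfXs_t\rangle}\right]=\E\!\left[e^{iz_1\Lev_t}\right]\E\!\left[e^{-iz_1\Sb_{\Sub_t}+iz_2\Sub_t}\right]=e^{t\Psi(z_1)}\,\E\!\left[e^{-iz_1\Sb_{\Sub_t}+iz_2\Sub_t}\right].
\end{equation*}
For the remaining factor, condition on $\Sub_t$ and use that, since $\Sb$ is a subordinator with Laplace exponent $\phi_\Sb$, the identity $\E[e^{-u\Sb_s}]=e^{-s\phi_\Sb(u)}$ extends by analytic continuation from $\Re(u)\geq0$ to all $u\in\C$ with $\Re(u)\geq 0$. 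Thus $\E[e^{-iz_1\Sb_s}]=e^{-s\phi_\Sb(iz_1)}$, and using independence of $\Sb$ and $\Sub$ this gives
\begin{equation*}
\E\!\left[e^{-iz_1\Sb_{\Sub_t}+iz_2\Sub_t}\right]=\E\!\left[e^{-\Sub_t(\phi_\Sb(iz_1)-iz_2)}\right]=e^{-t\phi_\Sub(\phi_\Sb(iz_1)-iz_2)},
\end{equation*}
where the last equality applies the Laplace exponent identity $\E[e^{-u\Sub_t}]=e^{-t\phi_\Sub(u)}$ to $u=\phi_\Sb(iz_1)-iz_2$. Combining the two factors yields $\E[e^{i\langle\mathbf z,\bfXs_t\rangle}]=e^{t\mathbf\Psi(\mathbf z)}$ with $\mathbf\Psi$ as stated.

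The only technical obstacle is justifying the two analytic extensions of Laplace exponents to complex arguments. For $\phi_\Sb$, this is routine since $iz_1$ lies on the boundary of the half-plane of convergence. For $\phi_\Sub$, the argument $u=\phi_\Sb(iz_1)-iz_2$ has $\Re(u)=\Re(\phi_\Sb(iz_1))=\int_0^\infty(1-\cos(z_1 y))\mu(dy)\geq 0$ by the L\'evy-Khintchine representation \eqref{LKs}, so $\phi_\Sub(u)$ is indeed well-defined as an analytic extension, and the subordination identity is valid. With these points verified, the identification of $\mathbf\Psi$ is complete.
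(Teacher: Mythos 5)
Your proposal is correct, but it establishes the L\'evy property by a genuinely different route than the paper. The paper verifies directly that $\bfXs$ has stationary and independent increments through a finite-dimensional distribution computation, conditioning on the values of $\Sub$ at the grid points and exploiting the mutual independence of $\Lev$, $\Sb$, $\Sub$ together with the stationarity and independence of their increments; the characteristic exponent is then obtained exactly as you do, by conditioning on $\Sub_t$. You instead note that $(t,\Sb_t)_{t\geq 0}$ is a bivariate L\'evy process, invoke Bochner subordination (Sato, Theorem 30.1, which is stated for $\R^d$-valued processes) to conclude that $(\Sub_t,\Sb_{\Sub_t})_{t\geq 0}$ is again L\'evy --- the neat point being that including deterministic time as a coordinate makes the subordinator itself appear as a component --- then concatenate with the independent process $\Lev$ and apply a linear map. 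This is shorter, relies on standard cited results, and would even hand you the generating triplet of $\bfXs$ for free; the paper's computation is more elementary and self-contained (no appeal to multivariate subordination) and is written in the same style as the proof of Proposition \ref{prop:reg}, reusing the same conditioning mechanics. Your care with the analytic extensions of $\phi_{\Sb}$ and $\phi_{\Sub}$ to arguments with nonnegative real part is a welcome detail that the paper passes over silently; the only blemish is notational, namely that in your bound $\Re(\phi_{\Sb}(iz_1))=\int_0^\infty(1-\cos(z_1y))\,\mu(dy)$ the measure should be the L\'evy measure of $\Sb$, whereas $\mu$ in \eqref{LKs} denotes that of $\Sub$; this does not affect the argument.
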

 \begin{proof}[\textbf{Proof}]
 	Let $n\geq 1$ and write $0=t_0\leq t_1<\ldots<t_{n+1}$, $D_n=\{(s_1,\ldots,s_n)\in[0,\infty)^n; \:s_1\leq  \ldots\leq s_n\}$ and set $B_1,\ldots,B_{n+1}\in\mathcal B(\mathbb R^2)$.
 Then, writing $\mathbf B_n = \cap_{j=1}^{n} \{ \bfXs_{t_{j+1}}-\bfXs_{t_j} \in B_j \}$ and $\nu_{t_1,\ldots, t_{n+1}}(ds_1,\ldots,ds_{n+1}) = \mathbb P(\Sub_{t_1}\in ds_1,\ldots,\Sub_{t_{n+1}}\in ds_{n+1})$, we get
 	\begin{eqnarray}\label{eq_increm_YK}
  \mathbb P  \left(\mathbf B_n \right) \nonumber
 \nonumber	& = & \int_{D_{n+1}}\hspace{-0.6cm}\mathbb P \left( \cap_{j=1}^n \left\{ \left( X_{t_{j+1}}-X_{t_j} - \Sb_{s_{j+1}}+\Sb_{s_j},s_{j+1}-s_j \right) \in B_j \right\} \right) \nu_{t_1,\ldots, t_{n+1}}(ds_1,\ldots,ds_{n+1}) \\
 \nonumber	&= &  \int_{D_{n+1}}\prod_{j=1}^n  \mathbb P \left(  \left( X_{t_{j+1}-t_j}- \Sb_{s_{j+1}-s_j},s_{j+1}-s_j \right) \in B_j \right)   \nu_{t_1,\ldots, t_{n+1}}(ds_1,\ldots,ds_{n+1}) \\
 \nonumber	&= &  \int_{[0,\infty)^{n+1}}  \prod_{j=1}^n  \mathbb P \left(  \left( X_{t_{j+1}-t_j}- \Sb_{u_{j+1}},u_{j+1} \right) \in B_j \right)   \nu_{t_1-t_0,\ldots, t_{n+1}-t_n}(du_1,\ldots,du_{n+1})  \\
 \nonumber	&= & \int_{[0,\infty)^{n+1}}  \prod_{j=1}^n  \mathbb P \left(  \left( X_{t_{j+1}-t_j}- \Sb_{u_{j+1}},u_{j+1} \right) \in B_j \right)  \prod_{i=0}^n \nu_{t_{i+1}-t_i}(du_{i+1})   \\
 \nonumber	&= & \prod_{j=1}^n \int_{[0,\infty)}\mathbb P \left(  \left( X_{t_{j+1}-t_j}- \Sb_{u_{j+1}} , u_{j+1} \right) \in B_j \right) \nu_{t_{j+1}-t_j}(du_{j+1}) \\
 	&= & \prod_{j=1}^n \mathbb P \left( \bfXs_{t_{j+1}-t_j} \in B_j \right),
 	\end{eqnarray}
 	where we used the independence of $X$, $\Sb$ and $\Sub$ in the first and last equality, the stationarity and independence of the increments of $X$ and $\Sb$ together with the independence of $X$ and $\Sb$ in the second equality  and the stationarity and independence of the increments of $\Sub$ in the fourth equality.
 	Equation \eqref{eq_increm_YK} with $n=1$ shows that $\bfXs$ has stationary increments. The stationarity of the increments together with \eqref{eq_increm_YK} shows that $\bfXs$ also has independent increments. Since the process $\bfXs$ has, in addition,  c\`adl\`ag sample paths, we conclude that it is a two-dimensional L\'evy process.
 	Finally, using the independence of $X$, $\Sb$ and $\Sub$, we get, that for any $\mathbf{z}=(z_1,z_2) \in \R^2$  and $t\geq 0$,
 	\begin{eqnarray*}
 	\mathbb E \left[ e^{i\langle\mathbf{z}, \bfXs_t\rangle}  \right]
  	 &= & \mathbb E \left[ e^{iz_1   \Xbs_t +iz_2 {\Sub}_t } \right]=  \mathbb E \left[ e^{iz_1  X_t}\right] \mathbb E \left[ e^{-iz_1  \Sb_{\Sub_t} +iz_2 {\Sub}_t }  \right]  \\
 	&= &  e^{\Psi(z_1 )t}  \int_0^\infty \mathbb E \left[ e^{-iz_1  \Sb_s +iz_2  s} \right] \mathbb P(\Sub_t\in d s)
 	=  e^{\Psi(z_1 )t} \mathbb E \left[ e^{-(\phi_\Sb(iz_1 ) -iz_2 ) \Sub_t }  \right]   \\
 &	= & e^{ ( \Psi(z_1 )  -\phi_\Sub(\phi_\Sb(iz_1 ) -iz_2 ) )t  }
 	\end{eqnarray*}
 which completes the proof of the lemma.
 \end{proof}

  We proceed by providing the change of measure, which requires us to  work in the canonical setting and we refer to \cite{Naj-Nik} for detailed explanations regarding why it can be problematic to do a change
 of measure in a non-canonical filtered probability space.   Let us write  $\mathcal{D}[0,\infty)^2$
for the space of c\`adl\`ag functions $\omega=(\omega_1,\omega_2):[0,\infty)\to\mathbb R^2$. The space $\mathcal{D}[0,\infty)^2$ is equipped with the Skorokhod
topology and its Borel $\sigma$-algebra $\mathcal G$ which is generated by the one-dimensional cylinder sets $\{\omega\in \mathcal{D}[0,\infty)^2;\:\omega(t)\in B\}$ where $t\geq 0$ and
$B\in\mathcal B(\mathbb R^2)$. We let $\mathbf X=(\mathbf X_t^{(1)},\mathbf X_t^{(2)})_{t\geq0}$ denote the canonical process on
$\left( \mathcal{D}[0,\infty)^2, \mathcal G \right)$,
 i.e.~$\mathbf X_t^{(1)}(\omega)=\omega_1(t)$ and $\mathbf X_t^{(2)}(\omega)=\omega_2(t)$ for $\omega=(\omega_1,\omega_2)\in \mathcal{D}[0,\infty)^2$. Next, let us denote by
 $(\mathcal G_t)_{t\geq 0}$  the natural filtration of the canonical process which means that, for any $t\geq 0$, $\mathcal G_t$ is the $\sigma$-algebra over
 $\mathcal{D}[0,\infty)^2$ generated by the one-dimensional cylinder sets $\{\omega\in \mathcal{D}[0,\infty)^2
 ; \:\omega(s)\in B\}$ where $s\in [0,t]$ and $B\in\mathcal B(\mathbb R^2)$.  We set, for any $t\geq 0$, $\mathcal G^+_{t}=\cap_{s>t}\mathcal G_s$. Let now   $\mathbb Q$ be the measure on  $\left( \mathcal{D}[0,\infty)^2, \mathcal G \right)$ defined, for any $G\in\mathcal G$,  by $\mathbb Q(G)= \mathbb P(  \bfXs\in G)$  where we recall that $\bfXs$ was introduced in \eqref{lem_levypair}.
  Note here that as, for any $t\geq 0$, $\bfXs_t:(\Omega,\mathcal F)\to (\mathbb R^2,\mathcal B(\mathbb R^2))$ is measurable and $\bfXs$ takes values in $\mathcal{D}[0,\infty)^2$ we have that $\bfXs:(\Omega,\mathcal F)\to \left( \mathcal{D}[0,\infty)^2, \mathcal G \right)$ is measurable, see e.g.~\cite[Lemma 3.1]{Kallenberg2001}.
 Then,  for any measurable function $\mathbf F:(\mathcal{D}[0,\infty)^2,\mathcal G)\to (\mathbb R,\mathcal B(\mathbb R))$,
 \begin{equation}\label{eq_fromQtoP}
 \begin{split}
 \mathbb E^{\mathbb Q}[ \mathbf F ] = & \int_{\mathcal{D}[0,\infty)^2} \mathbf F(\omega) \mathbb Q(d\omega)
 =   \int_\Omega  \mathbf F \left( \bfXs(\omega)  \right) \mathbb P(d\omega)
 =   \mathbb E \left[ \mathbf F \left( \bfXs \right) \right],
 \end{split}
 \end{equation}
 where  $\mathbb E^\mathbb Q$ denotes the expectation operator associated with $\mathbb Q$, see e.g.~\cite[Lemma 1.22]{Kallenberg2001}.
 From \eqref{eq_fromQtoP} and Lemma \ref{lem_levypair} it is obvious that $\mathbf X$ under $\mathbb Q$ %
 is a two-dimensional L\'evy process with the same characteristic exponent $\mathbf \Psi$ as  $\bfXs$.

An interesting feature of the change of measure, namely \eqref{local_measure_change} below,  that we are going to apply shortly, is that it  changes the law of $\Xbs$ by altering only the law of $\Sub$ but not of $\Lev$ or $\Sb$. Although it looks like a classical Esscher change of measure performed to the one-dimensional L\'evy process $\Sub$, since we need to work with a larger filtration than the canonical analogue of (the right-continuous augmentation of) the natural filtration of $\Sub$ in order for $T_a(\Xbs)$ to become a stopping time, we introduce in \eqref{local_measure_change} a two-dimensional Esscher change of measure disguising as a one-dimensional one. In order to make this entirely clear,
we state below the whole class of Esscher changes of measure corresponding to bivariate L\'evy processes.

 \begin{lem}\label{lem_2d_esscher}
 	Let $\overline{\mathbb Q}$ be a probability measure on $(\mathcal{D}[0,\infty)^2, \mathcal G)$ which is the law of a two-dimensional L\'evy process with characteristic exponent denoted by $\overline{\mathbf{\Psi}}$, i.e.~for any $t\geq 0,$
 	\begin{equation*}
 	  \mathbb E^{\overline{\mathbb Q}} \left[ e^{i \langle\xi,\mathbf{X}_t\rangle} \right] = \int_{\mathcal{D}[0,\infty)^2} e^{i \langle\xi,\mathbf{X}_t\rangle} \overline{\mathbb Q}(\mathrm d\omega)=e^{\overline{\mathbf{\Psi}}(\xi)t}, \quad \xi\in\Xi=\{\xi\in\mathbb C^2; \:\mathbb E^{\overline{\mathbb Q}} \left[  \left| e^{i \langle\xi,\mathbf{X}_1\rangle} \right|  \right] <\infty\}.
 	\end{equation*}
 	Then for any $\bar\xi\in\Xi \cap i\mathbb R^2$, i.e.~$\Re(\bar\xi)=0$,  there exists a unique probability measure $\mathbb Q^{(\bar\xi)}$  on $\left( \mathcal{D}[0,\infty)^2, \mathcal G \right)$ defined  for any $t\geq 0$ and $A\in\mathcal G^+_{t}$ by
 	\begin{equation}\label{meas_change_general}
 	\mathbb Q^{(\bar\xi)}(A) = \mathbb E^{\overline{\mathbb Q}} \left[ \mathbf M^{(\bar\xi)}_t \mathbb{I}_A \right] = \mathbb E^{\overline{\mathbb Q}} \left[  e^{i \langle\bar\xi,\mathbf{X}_t\rangle - \overline{\mathbf{\Psi}}(\bar\xi)t} \mathbb{I}_A \right],
 	\end{equation}
 	where $\mathbf M^{(\bar\xi)}= ( \mathbf M^{(\bar\xi)}_t)_{t\geq 0}$ defined by 	$\mathbf M^{(\bar\xi)}_t = \exp(i \langle\bar\xi,\mathbf{X}_t\rangle - \overline{\mathbf{\Psi}}(\bar\xi)t)$ is a positive, unit-mean martingale with respect to the filtration $(\mathcal G^+_t)_{t\geq 0}$.
 	Moreover, under  $\mathbb Q^{(\bar\xi)}$, $\mathbf{X}$ is a two-dimensional L\'evy process with characteristic exponent given, for any $\mathbf z\in\R^2$, by
 	\begin{equation*}
 	\mathbf{\Psi}_{\bar\xi}(\mathbf z)=\overline{\mathbf{\Psi}}(\mathbf z+\bar\xi)-\overline{\mathbf{\Psi}}(\bar\xi).
 	\end{equation*}
 \end{lem}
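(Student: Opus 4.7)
The plan is to establish the three assertions of the lemma in their natural order: positivity and the unit-mean $(\mathcal G^+_t)$-martingale property of $\mathbf M^{(\bar\xi)}$, existence and uniqueness of $\mathbb Q^{(\bar\xi)}$, and the Lévy structure of $\mathbf X$ under $\mathbb Q^{(\bar\xi)}$. The whole argument amounts to a two-dimensional Esscher transform performed in the canonical setting, which, as emphasized in \cite{Naj-Nik}, is essential so that first passage times of $\mathbf X$ will be stopping times with respect to the filtration along which the change of measure is carried out.

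For the first step, since $\bar\xi\in\Xi\cap i\R^2$, writing $\bar\xi=i\eta$ with $\eta\in\R^2$ gives $i\langle\bar\xi,\mathbf X_t\rangle=-\langle\eta,\mathbf X_t\rangle\in\R$, while the identity $e^{\overline{\mathbf\Psi}(\bar\xi)t}=\E^{\overline{\mathbb Q}}[e^{-\langle\eta,\mathbf X_t\rangle}]>0$ shows that $\overline{\mathbf\Psi}(\bar\xi)\in\R$ and that $\mathbf M^{(\bar\xi)}_t>0$. I would then use the stationarity and independence of the increments of $\mathbf X$ under $\overline{\mathbb Q}$, applied to the multiplicative decomposition $\mathbf M^{(\bar\xi)}_t=\mathbf M^{(\bar\xi)}_s e^{i\langle\bar\xi,\mathbf X_t-\mathbf X_s\rangle-\overline{\mathbf\Psi}(\bar\xi)(t-s)}$ for $s\leq t$, to obtain both the unit mean and the $(\mathcal G_t)_{t\geq 0}$-martingale property by direct conditioning. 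The upgrade to the right-continuous filtration $(\mathcal G^+_t)_{t\geq 0}$ follows from the càdlàg paths of $\mathbf M^{(\bar\xi)}$: for $A\in\mathcal G^+_s$ and $t>s$, pick $s_n\downarrow s$ with $s_n<t$, apply the already established martingale identity $\E^{\overline{\mathbb Q}}[\mathbf M^{(\bar\xi)}_t\mathbb I_A]=\E^{\overline{\mathbb Q}}[\mathbf M^{(\bar\xi)}_{s_n}\mathbb I_A]$, and pass to the limit using right-continuity of $\mathbf M^{(\bar\xi)}$ together with the uniform integrability inherited from $\overline{\mathbb Q}$-martingality.

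For existence and uniqueness of $\mathbb Q^{(\bar\xi)}$, I would define its restriction to each $\mathcal G^+_t$ by the Radon--Nikodym derivative $\mathbf M^{(\bar\xi)}_t$, check that these restrictions are consistent as $t$ varies by the martingale property, and invoke a Kolmogorov--Daniell-type extension theorem on the Polish space $\mathcal D[0,\infty)^2$ to produce a unique probability measure on $\mathcal G=\sigma(\bigcup_{t\geq 0}\mathcal G^+_t)$ with the required restrictions. Finally, for any $n\geq 1$, times $0=t_0<t_1<\ldots<t_n$ and $\mathbf z_1,\ldots,\mathbf z_n\in\R^2$, substituting $\mathbf M^{(\bar\xi)}_{t_n}$ into \eqref{meas_change_general} and factoring the resulting $\overline{\mathbb Q}$-expectation via the independent and stationary increments of $\mathbf X$ under $\overline{\mathbb Q}$---which is legitimate since $|e^{i\langle\mathbf z_j+\bar\xi,\mathbf X_1\rangle}|=|e^{i\langle\bar\xi,\mathbf X_1\rangle}|$ forces $\mathbf z_j+\bar\xi\in\Xi$---yields
\begin{equation*}
\E^{\mathbb Q^{(\bar\xi)}}\Bigl[e^{i\sum_{j=1}^n\langle\mathbf z_j,\mathbf X_{t_j}-\mathbf X_{t_{j-1}}\rangle}\Bigr]=\prod_{j=1}^n e^{(\overline{\mathbf\Psi}(\mathbf z_j+\bar\xi)-\overline{\mathbf\Psi}(\bar\xi))(t_j-t_{j-1})},
\end{equation*}
which simultaneously identifies the characteristic exponent as $\mathbf\Psi_{\bar\xi}$ and shows that the increments of $\mathbf X$ under $\mathbb Q^{(\bar\xi)}$ are stationary and independent; combined with the inherited càdlàg paths this establishes the Lévy property. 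I expect the main obstacle to be the extension step, namely ensuring that the consistent family of locally absolutely continuous restrictions genuinely extends to a probability measure on all of $\mathcal G$ rather than remaining merely finitely additive; once this is in place, the remainder of the argument is a routine characteristic-function computation.
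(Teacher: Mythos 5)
Your overall architecture differs from the paper's: the paper gets existence and uniqueness of $\mathbb Q^{(\bar\xi)}$ in one stroke from the Esscher-transform results in Sato (Example 33.14, Definition 33.3, Theorem 25.17), which assert that $\mathbf{\Psi}_{\bar\xi}$ is again the characteristic exponent of a two-dimensional L\'evy process and that its law on the canonical space satisfies \eqref{meas_change_general} on each $\mathcal G_t$; the only remaining work there is the upgrade from $\mathcal G_t$ to $\mathcal G^+_t$ (done, as you also do, via the c\`adl\`ag paths of $\mathbf M^{(\bar\xi)}$ and dominated convergence), after which the martingale property and the identification of the exponent follow from \eqref{meas_change_general} by exactly the characteristic-function computation you carry out at the end. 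Your positivity, martingale and finite-dimensional computations are correct, and in those parts your argument is even more self-contained than the paper's.

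The genuine gap is the existence step, which you yourself flag as the main obstacle and then leave unresolved. A ``Kolmogorov--Daniell-type extension'' does not apply off the shelf: the consistent family $\mathbb Q_t:=\mathbf M^{(\bar\xi)}_t\cdot\overline{\mathbb Q}\vert_{\mathcal G^+_t}$ lives on sub-$\sigma$-algebras of the Skorokhod space, and the classical extension theorem only produces, from the finite-dimensional distributions, a measure on the product $\sigma$-algebra of $(\mathbb R^2)^{[0,\infty)}$; to land on $(\mathcal{D}[0,\infty)^2,\mathcal G)$ you must in addition show that these finite-dimensional laws are those of a process admitting a c\`adl\`ag realization, and extending consistent, locally absolutely continuous families along a filtration is precisely the delicate issue for which the paper cites \cite{Naj-Nik}. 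The missing ingredient is the fact that $\mathbf{\Psi}_{\bar\xi}$ is a genuine L\'evy--Khintchine exponent: once this is known --- either through the explicit Esscher modification of the triplet as in Sato, or by observing that your finite-dimensional identity exhibits stationary independent increments together with stochastic continuity and then invoking the existence of a c\`adl\`ag L\'evy process with those marginals --- one can simply \emph{define} $\mathbb Q^{(\bar\xi)}$ as the law of that L\'evy process on $\mathcal{D}[0,\infty)^2$ and verify \eqref{meas_change_general} on $\mathcal G_t$ by a monotone class argument, uniqueness being immediate since $\bigcup_{t\geq 0}\mathcal G^+_t$ is a $\pi$-system generating $\mathcal G$. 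Without supplying this ingredient the consistent family could a priori fail to extend (remaining merely finitely additive, as you note), so this step must be proved rather than invoked.
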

 \begin{proof}
 	Let $\bar\xi\in\Xi\cap i\mathbb R^2$. From   \cite[Example 33.14 (with $\eta=i\bar\xi$), Definition 33.3 and Theorem 25.17]{Sato1999} it follows that there exists a unique probability measure $\mathbb Q^{(\bar\xi)}$  on $\left( \mathcal{D}[0,\infty)^2, \mathcal G \right)$ satisfying
 	\eqref{meas_change_general} for all $t\geq 0$ and $A\in\mathcal G_t$.
 	Then for $A\in\mathcal G_{t+}\subset \mathcal G_{t+1/n}$ for any $n\geq 1$, we have by the dominated convergence theorem for conditional expectation and the fact that $\mathbf M^{(\bar\xi)}$ has c\`adl\`ag sample paths,
 	\begin{equation*}
 	\mathbb Q^{(\bar\xi)}(A) = \lim_{n\to \infty} \mathbb E^{\overline{\mathbb Q}} \left[ \mathbf M^{(\bar\xi)}_{t+\frac1n} \mathbb{I}_A \right] =  \mathbb E^{\overline{\mathbb Q}} \left[ \lim_{n\to \infty} \mathbf M^{(\bar\xi)}_{t+\frac1n} \mathbb{I}_A \right] = \mathbb E^{\overline{\mathbb Q}} \left[  \mathbf M^{(\bar\xi)}_{t} \mathbb{I}_A \right]
 	\end{equation*}
 	and so \eqref{meas_change_general} holds for all $t\geq 0$ and $A\in\mathcal G^+_t$. From \eqref{meas_change_general}  and the fact that $\mathbb Q^{(\bar\xi)}$ is a probability measure  on $\left( \mathcal{D}[0,\infty)^2, \mathcal G \right)$, it follows that  $\mathbf M^{(\bar\xi)}$ is a unit-mean martingale with respect to $(\mathcal G^+_t)_{t\geq 0}$.
	Regarding the last statement, from the aforementioned reference it further follows that  $\mathbf{X}$ under $\mathbb Q^{(\bar\xi)}$ is a L\'evy process and via \eqref{meas_change_general} the corresponding characteristic function is easily seen to be, for any $\mathbf z\in\R^2$ and $t\geq 0$,
 	\begin{eqnarray*}
 		\mathbb E^{\mathbb Q^{(\bar\xi)}} \left[ e^{i\langle \mathbf z, \mathbf{X}_t\rangle} \right]  =   \mathbb E^{\overline{\mathbb Q}}  \left[ e^{i\langle \mathbf z+\bar\xi,\mathbf{X}_t\rangle - \overline{\mathbf{\Psi}}(\bar\xi)t }  \right]
 		=  e^{ (\overline{\mathbf{\Psi}}(\mathbf z+\bar\xi)-\overline{\mathbf{\Psi}}(\bar\xi)) t  }.
 	\end{eqnarray*}
 \end{proof}

We have now all the ingredients to complete the proof of Theorem \ref{thm2}.   Recalling  that $\T_a^{(\Sb)}=\ST{\T}{\Xb}$ and using the identity \eqref{rightinverse_L},  we first observe by combining Lemma \ref{lem:T=T} with Lemma \ref{lem:T=kT}, that, for any $q,v\geq 0$ and $a>0$,
\begin{eqnarray}\label{eq:idTT}
  \mathbb E \left[ e^{-q \T_a^{(\Sb)} - v \X^{(\Sb)}_{\T_a^{(\Sb)}} }  \mathbb{I}_{\{ \T_a^{(\Sb)}<\infty \}} \right] &=& \mathbb E \left[ e^{-q \Sub_{\ST{T}{\Xbs}} - v \Xbs_{{\ST{T}{\Xbs}}}}  \mathbb{I}_{\{ \ST{T}{\Xbs}<\infty \}} \right].
 \end{eqnarray}
Next, since $\mathbf X$ under $\mathbb Q$ is a  L\'evy process whose characteristic exponent is $\mathbf{\Psi}(\mathbf{z}) =  \Psi(z_1)  -\phi_\Sub(\phi_\Sb(iz_1 ) +iz_2)$  with $\phi_\Sub$  a Bernstein function, $\mathbf{\Psi}$ admits an analytical continuation to the domain $\{\mathbf{z}=(z_1,z_2) \in \mathbb C^2; \Im(z_1)=0 \textrm{ and } \Im(z_2)>0  \}$. Thus, Lemma \ref{lem_2d_esscher} applied  to the measure $\overline{\mathbb Q}=\mathbb Q$ yields that, for any  $\bar\xi_q=(0,iq)$, $q>0$,  there exists a unique probability measure $\mathbb Q^{(\bar\xi_q)}$ on $(\mathcal{D}[0,\infty)^2,\mathcal G)$ satisfying for any $t\geq 0$ and  $A\in\mathcal G^+_{t}$
 \begin{equation}\label{local_measure_change}
 \mathbb Q^{(\bar\xi_q)}(A)= \mathbb E^{\mathbb Q} \left[ e^{-q \mathbf X^{(2)}_t + \phi_\Sub(q)t} \mathbb{I}_A \right],
 \end{equation}
 where we recall that, under $\mathbb Q$, $\mathbf X^{(2)}$ is a subordinator with Laplace exponent $\phi_\Sub$.
  Since for any $a\geq 0$,  $\ST{T}{\mathbf X^{(1)}}=\inf\{t>0; \: \mathbf X^{(1)}_t>a\}$ is a $(\mathcal G^+_{t})_{t\geq0}$-stopping time, see  \cite[lemma 7.6 and 7.2]{Kallenberg2001}, it follows that, for all $A\in \mathcal G^+_{\ST{T}{\mathbf X^{(1)}}}$,%
 \begin{equation*}
\mathbb E^{\mathbb Q^{(\bar\xi_q)}} \left[ \mathbb I_{\{ A\cap \{\ST{T}{\mathbf X^{(1)}}<\infty\}\}} \right] = \mathbb E^{\mathbb Q} \left[  e^{-q \mathbf X^{(2)}_{\ST{T}{\mathbf X^{(1)}}} + \phi_\Sub(q) \ST{T}{\mathbf X^{(1)}} } \mathbb{I}_{\{A\cap\{ \ST{T}{\mathbf X^{(1)}}<\infty\}\}} \right].
 \end{equation*}
see \cite[Lemma 10.2.2]{rolskietal_book}.
 Hence via \eqref{eq_fromQtoP} and noting that $e^{-\phi_\Sub(q) \ST{T}{\mathbf X^{(1)}} - v \mathbf X^{(2)}_{\ST{T}{\mathbf X^{(1)}}} }$ is $\mathcal G^+_{\ST{T}{\mathbf X^{(1)}}}$-measurable, see \cite[Lemma 7.5]{Kallenberg2001}, we have, using the equality \eqref{eq:idTT} and the characterization of $\mathbf X$ under $\mathbb Q$ given before Lemma \ref{lem_2d_esscher}, that for any $q,v\geq 0$,
 \begin{eqnarray} \label{eq:JLT}
  \mathbb E \left[ e^{-q \T_a^{(\Sb)} - v \X^{(\Sb)}_{\T_a^{(\Sb)}} }  \mathbb{I}_{\{ \T_a^{(\Sb)}<\infty \}} \right]  &= &   \mathbb E^{\mathbb Q} \left[  e^{-q \mathbf X^{(2)}_{\ST{T}{\mathbf X^{(1)}}} -v \mathbf X^{(1)}_{\ST{T}{\mathbf X^{(1)}}}} \mathbb{I}_{\{ \ST{T}{\mathbf X^{(1)}}<\infty\}} \right]  \nonumber \\
 &= & \mathbb E^{\mathbb Q^{(\bar\xi_q)}} \left[  e^{-\phi_\Sub(q) \ST{T}{\mathbf X^{(1)}} -v \mathbf X^{(1)}_{\ST{T}{\mathbf X^{(1)}}}} \mathbb{I}_{\{ \ST{T}{\mathbf X^{(1)}}<\infty\}} \right].%
 \end{eqnarray}
According to Lemma \ref{lem_2d_esscher}, under $\mathbb Q^{(\bar\xi_q)}$, $\mathbf X$ is a two-dimensional L\'evy process with characteristic exponent given, for any $\mathbf z=(z_1,z_2) \in \R^2$, by
 \begin{equation*}%
\mathbf{\Psi}_{\bar\xi_q}(\mathbf z)= \mathbf{\Psi}(\mathbf{z}+\bar\xi_q)-\mathbf{\Psi}(\bar\xi_q) =\Psi(z_1) - { \phi_\Sub(\phi_{\Sb}(iz_1) -iz_2+q) + \phi_\Sub(q) }.
 \end{equation*}
 Hence  $\mathbf X^{(1)}$ is a one-dimensional L\'evy process whose characteristic exponent takes the form, for any $z\in \R$,
$\Psi_{\Ikea}(z)=\mathbf{\Psi}_{\bar\xi_q}((z,0)) =\Psi(z) - { \phi_\Sub(\phi_{\Sb}(iz)+q) + \phi_\Sub(q) }$. %
The identity \eqref{main} then follows from \eqref{eq:JLT} and the Pecherskii-Rogozin identity   stated in \cite[Theorem 49.2]{Sato1999}. By letting $q\downarrow 0$ in  \eqref{main}, we obtain \eqref{main_qis0}. Finally, if $\Xbs$ does not drift to $-\infty$, then acording to \cite[Proposition 37.10]{Sato1999}), $\ST{T}{\Xbs}<\infty$ a.s.~and thus  $\T_a^{(\Sb)}<\infty$ a.s.~by lemmas \ref{lem:T=T} and \ref{lem:T=kT}.

\subsection{Proof of Proposition \ref{cor:sn}}
First, observe that for some (or equivalently any)  $a\geq 0$, that $\P(\T_a^{(\Sb)}<\infty)>0$ if and only if the process $\X^{(\Sb)}$ does not have non-increasing sample paths. These conditions are equivalent to the process $\X^{(\Sb_{\Sub_\Lt})}$, or equivalently, the process  $\Lev - \Sb_{\Sub}$, not having non-increasing sample paths either. However, since the latter process is a L\'evy process, the condition will be satisfied whenever it is not the negative of a subordinator. This is the case if and only if $\sigma^2>0$ or $\int_{1}^{0}|y|\Pi(dy)=\infty$ or its drift, which is easily computed to be $\mathrm{d}_X +\int_{-1}^0 |y| \Pi(dy) - \dt \dk$, is positive, which completes the proof of the first claim.
Regarding the  item \eqref{it:cor_1}, observe that by monotonicity we have, a.s.~$\lim_{v\downarrow a}\T_{v}^{(\Sb)}=\T_{a}^{(\Sb)}$ and  $\lim_{v\uparrow a} \T_{v}^{(\Sb)}=\inf\{t>0; \:\X^{(\Sb)}_t\geq a\}$ for all $a\geq 0$ and so the process $(\T_a^{(\Sb)})_{a\geq 0}$ has right-continuous sample paths with left limits. Then, in order  for $(\T_a^{(\Sb)})_{a\geq 0}$ to be a subordinator killed at rate $\phi_{{\Sub_{\Sb}^{\triangleright 0}}}(0)$, we need to show, see \cite[Section III.1]{Bertoin-96}, that, for any $a\geq 0$, $\mathbb P(\T_a^{(\Sb)}<\infty) = \exp(-\phi_{{\Sub_{\Sb}^{\triangleright 0}}}(0)a)$  and for any $h\geq  0$, under $\mathbb P(\cdot|\T_a^{(\Sb)}<\infty)$, $\T_{a+h}^{(\Sb)}-\T_a^{(\Sb)}$ is independent of $(\T_v^{(\Sb)})_{0\leq v\leq a}$ and has the same law as $\T_h^{(\Sb)}$ under $\mathbb P$.
 Note that by Lemma \ref{lem:T=T}, $\mathbb P \left( \T_a^{(\Sb)}=\ST{\T}{\X^{(\Sb_{\Sub_\Lt})}} \right)=1$ for all $a\geq0$, which implies, as both $\T_a^{(\Sb)}$ as well as $\ST{\T}{\X^{(\Sb_{\Sub_\Lt})}}$ are right-continuous in $a$ that, $\mathbb P \left( \T_a^{(\Sb)}=\ST{\T}{\X^{(\Sb_{\Sub_\Lt})}}, \ \forall a\geq 0 \right)=1$. Hence we are done if we show that $\left( \ST{\T}{\X^{(\Sb_{\Sub_\Lt})}} \right)_{a\geq 0}$ is a subordinator killed at rate $\phi_{{\Sub_{\Sb}^{\triangleright 0}}}(0)$.
To this end, for any $a\geq 0$, by Lemma \ref{lem:T=kT} and a well-known result for spectrally negative L\'evy processes, see e.g.~\cite[Corollary 3.13]{Kyprianou-14}, one has that
\begin{equation*}
 \mathbb P(\ST{\T}{\X^{(\Sb_{\Sub_\Lt})}} <\infty) = \mathbb P(\Sub_{\ST{T}{\Xbs}} <\infty) = \mathbb P({\ST{T}{\Xbs}} <\infty)
 = e^{-\phi_{{\Sub_{\Sb}^{\triangleright 0}}}(0)a}.
\end{equation*}
Then, since $(\Xbs,\Sub)$ is a bivariate L\'evy process by Lemma \ref{lem_levypair} we are in the setting of Proposition \ref{prop:reg} where with the notation therein $(\Lev,\Sub)=(\Xbs,\Sub)$.  Since the filtration $(\widetilde{\mathcal{F}}_t)_{t\geq 0}$, as defined again in Proposition \ref{prop:regenerative}, is right-continuous and $\X^{(\Sb_{\Sub_\Lt})}$ is $(\widetilde{\mathcal{F}}_t)_{t\geq 0}$-adapted (recall that $\X^{(\Sb_{\Sub_\Lt})}_t=\Xbs_{\Lt_t}$), we have that, for any $a\geq 0$, $\ST{\T}{\X^{(\Sb_{\Sub_\Lt})}}$ is an $(\widetilde{\mathcal{F}}_t)_{t\geq 0}$-stopping time, see  \cite[Lemmas 7.6 and 7.2]{Kallenberg2001}. Moreover,  by Lemma \ref{lem:T=T}, one has that $[\ST{\T}{\X^{(\Sb_{\Sub_\Lt})}}]\subseteq \Rs\cup\{\infty\}$ a.s.~for any $a\geq 0$, and,  by lack of upward jumps $\X^{(\Sb_{\Sub_\Lt})}_{\ST{\T}{\X^{(\Sb_{\Sub_\Lt})}}}=a$ if $\ST{\T}{\X^{(\Sb_{\Sub_\Lt})}}<\infty$. Therefore, one gets, given $\ST{\T}{\X^{(\Sb_{\Sub_\Lt})}}<\infty$, that
\begin{eqnarray*}
{\T}_{a+h}(\X^{(\Sb_{\Sub_\Lt})}) - \ST{\T}{\X^{(\Sb_{\Sub_\Lt})}}
&= & \inf\{t>\ST{\T}{\X^{(\Sb_{\Sub_\Lt})}}; \: \X^{(\Sb_{\Sub_\Lt})}_t  > a+ h\} - \ST{\T}{\X^{(\Sb_{\Sub_\Lt})}} \\
&= & \inf \left\{ t>\ST{\T}{\X^{(\Sb_{\Sub_\Lt})}};\: \X^{(\Sb_{\Sub_\Lt})}_{t} - \X^{(\Sb_{\Sub_\Lt})}_{\ST{\T}{\X^{(\Sb_{\Sub_\Lt})}}} > h \right\} - \ST{\T}{\X^{(\Sb_{\Sub_\Lt})}} \\
&= & \inf \left\{ t>0;\: \X^{(\Sb_{\Sub_\Lt})}_{t+\ST{\T}{\X^{(\Sb_{\Sub_\Lt})}}} - \X^{(\Sb_{\Sub_\Lt})}_{\ST{\T}{\X^{(\Sb_{\Sub_\Lt})}}} > h \right\}.
\end{eqnarray*}
This combined with Proposition \ref{prop:reg} yield that, for any $a,h\geq 0$, under $\mathbb P(\cdot|\ST{\T}{\X^{(\Sb_{\Sub_\Lt})}}<\infty)$, ${\T}_{a+h}(\X^{(\Sb_{\Sub_\Lt})}) - \ST{\T}{\X^{(\Sb_{\Sub_\Lt})}}$ is independent of $\widetilde{\mathcal{F}}_{\ST{\T}{\X^{(\Sb_{\Sub_\Lt})}}}$ and has the same law as $\T_h(\X^{(\Sb_{\Sub_\Lt})})$ under $\mathbb P$. As $\T_v(\X^{(\Sb_{\Sub_\Lt})})$ is $\widetilde{\mathcal{F}}_{\ST{\T}{\X^{(\Sb_{\Sub_\Lt})}}}$-measurable for all $0\leq v\leq a$ since $\T_v(\X^{(\Sb_{\Sub_\Lt})})$ is increasing in $v$ and an $(\widetilde{\mathcal{F}}_t)_{t\geq 0}$-stopping time, we conclude that  $(\ST{\T}{\X^{(\Sb_{\Sub_\Lt})}})_{a\geq 0}$ is a subordinator killed at rate $\phi_{{\Sub_{\Sb}^{\triangleright 0}}}(0)$.
Since, for any $q\geq 0$, $z\mapsto \Psi_{\Ikea}(z)$ is the characteristic exponent of a spectrally negative L\'evy process we have that its positive Wiener-Hopf factor $\Phi_\Ikea(p;z)$ is given by $\Phi_\Ikea(p;z)=\frac{\phi_{\Ikea}(p)}{\phi_{\Ikea}(p)-iz}$ for $p>0$ and $\Im(z)\geq 0$, see e.g.~\cite[Equation (8.4)]{Kyprianou-14}. Hence, performing a simple Laplace inversion to the composite Wiener-Hopf identity \eqref{main}, one has, for any $q>0$ and $a\geq 0$,
 \[\E\left[ e^{-q \T^{(\Sb)}_a } \mathbb I_{\{\T^{(\Sb)}_a <\infty\}} \right]= e^{-\phi_{\Ikea}(\phi_\Sub(q))a} = e^{-{\phi}_{\T^{(\Sb)}}(q)a}. \] The case  $q=0$  follows by taking the limit on both sides. Since ${\phi}_{\T^{(\Sb)}}$ is the Laplace exponent of a subordinator, it must be a Bernstein function, which completes the proof of item \eqref{it:cor_1}.
 The first part of item \eqref{it:csn2} follows easily from  the previous one whereas the second part is obtained after observing  that the mapping $\Psi_{\Sub}(-iu)= u^{\frac{\beta}{\alpha}}, u\geq 0,$ with $0<\alpha<1<\beta \leq 2\alpha$, is indeed on the one hand the Laplace exponent of a spectrally negative L\'evy process, namely the one of a $\frac{\beta}{\alpha}$-stable L\'evy process with $1<\frac{1}{\alpha}<\frac{\beta}{\alpha}<2$, and,  on the other hand the inverse of $\phi \circ \phi_{\Sub}(u)=u^{\frac{\alpha}{\beta}}$, where $\phi$ is the inverse of $\Psi(-iu)=u^{\beta}$, $u\geq 0$. This provides the proof of this item. %
Finally, from lemmas \ref{lem:T=T} and \ref{lem:T=kT}, we get that, under $\mathbb P$, for any $a\geq 0$, $\T_a=\Sub_{T_a(\Lev)}$ and $\widehat\T_{-a}=\Sub_{\widehat T_{-a}(\Lev)}$ where $\widehat T_{a}(\Lev) = \inf\{t>0;\:  {\Lev}_t < a\}$. Hence by spatial homogeneity and the fact that $\Sub$ is increasing, we have, for $0\leq x\leq a$ and $q\geq 0$,
\begin{eqnarray*}
\E_x \left[e^{-q \T_{a} }\mathbb{I}_{\{\T_{a} <\widehat{\T}_0\}} \right]
&= & \E \left[e^{-q \T_{a-x} }\mathbb{I}_{\{\T_{a-x} <\widehat{\T}_{-x}\}} \right] \\
&= & \E \left[e^{-q \Sub_{T_{a-x}(\Lev)} } \mathbb{I}_{\{ \Sub_{T_{a-x}(\Lev)} <\Sub_{\widehat T_{-x}(\Lev)} \}} \right] \\
&= & \E_x \left[ e^{-q \Sub_{T_{a}(\Lev)} } \mathbb{I}_{\{ T_{a}(\Lev) < \widehat T_{0}(\Lev) \}} \right] \\
&= & \E_x \left[ e^{-\phi_\Sub(q) T_{a}(\Lev) } \mathbb{I}_{\{ T_{a}(\Lev) < \widehat T_{0}(\Lev) \}} \right] \\
&= &  \frac{W^{(\phi_{\Sub}(q))}(x)}{W^{(\phi_{\Sub}(q))}(a)}
\end{eqnarray*}
where for the penultimate equality we used the independence of $X$ and $\Sub$ and for the last one we used \cite[Theorem 8.1(iii)]{Kyprianou-14}. Similarly one proves the last identity.

\subsection{Proof of Proposition \ref{cor:ident}}
 Since it is assumed that $\Sub$ is an $\alpha$-stable subordinator, we have, in particular,
 that  for any $s>0$, $\Sub_s \stackrel{(d)}{=} s^{\frac{1}{\alpha}}\Sub_1$.
 Then, using  Lemma \ref{lem:T=kT} with $\Sb=0$, by conditioning and using the independence of the involved random variables, we obtain that, for any $t\geq0$ and $x>0$,
\begin{eqnarray}
\P_{-x}(\T_0 \in dt)&=&\int_{0}^{\infty}\P(\Sub_s \in dt)\P_{-x}(\Tm_0 \in ds) \nonumber \\
&=&\int_{0}^{\infty}\P(s^{\frac{1}{\alpha}}\Sub_1 \in dt)\P_{-x}(\Tm_0 \in ds) \label{eq:abs}\\
&=& \P_{-x}(\Sub_1 \Tm_0^{\frac{1}{\alpha}} \in dt)\nonumber
\end{eqnarray}
which completes the proof of the  first identity in law, that is
 \beq\label{eq:idT}
\T_0 \stackrel{(d)}{=} \Sub_1 \times \Tm_0^{\frac{1}{\alpha}}.
\eeq
The second claim follows readily from  \eqref{eq:abs} and the fact that the law of $\Sub_s$ is absolutely continuous for all $s> 0$, see \cite[Chap.~3.14]{Sato1999}. Next, the fact that $\P_{-x}(\Tm_0>0)>0, x>0,$ combined with the identity \eqref{eq:idT}  and  $\E[\Sub_1^{\alpha}]=\infty$, see \eqref{eq:ms} below, imply the first claim of item  \eqref{it:corm} that is $\E_{-x}[\T_0^{\alpha}]=\infty$. Next, we recall that Doney and Maller \cite[Theorem 2]{Doney-Maller} showed that the condition $\int_0^{1}  e^{\int_1^{\infty}e^{-qt}\P(Z_t\leq 0) \frac{dt}{t}}\frac{dq}{q^{\delta}} < \infty$ for some $0<\delta<1$ is equivalent to  $\E_{-x}\left[\Tm_0^{\delta}\right]<\infty$. Then, classical results on Mellin transform  yield that, for any $x>0$, the mapping
\begin{equation}\label{eq:ht} z\mapsto \E_{-x}\left[\Tm_0^{\frac{z}{\alpha}}\right] \textrm{ is holomorphic in the strip } \mathbb{S}_{(0,\alpha \delta)}= \left\{z\in \C;\: 0<\Re(z)<\alpha \delta\right \}
\end{equation}
with
\begin{equation} \label{eq:bm} \left|\E_{-x}\left[\Tm_0^{\frac{z}{\alpha}}\right]\right|\leq \E_{-x}\left[\Tm_0^{\frac{\Re(z)}{\alpha}}\right]<\infty \textrm{ for any  } z \in \mathbb{S}_{(0,\alpha \delta)}.
\end{equation}
Hence,  recalling, from e.g.~\cite[(25.5)]{Sato1999}, that, with $\phi(u)=u^{\alpha}$,
\begin{equation} \label{eq:ms}
z\mapsto  \E[\Sub_1^{z}] = \frac{\Gamma(1-\frac{z}{\alpha})}{\Gamma(1-z)} \textrm{ is holomorphic in the left half-plane } \mathbb{S}_{(-\infty,\alpha )},
\end{equation}
we deduce, by means of the identity \eqref{eq:idT}, the property \eqref{eq:ht} and since $\delta \in (0,1)$, that
\begin{equation} \label{eq:hto} z\mapsto \E_{-x}\left[\T_0^{z}\right]=\E[\Sub_1^{z}] \E_{-x}\left[\Tm_0^{\frac{z}{\alpha}}\right] \textrm{ is holomorphic in the strip } \mathbb{S}_{(0,\alpha \delta)}. \end{equation}
Moreover, recalling the  Stirling formula of the gamma function, that for fixed $a\in\R$,
\begin{equation} \label{eq:asympt_gamma}
|\Gamma(a+ib)|\sim C |b|^{a-\frac12}e^{-\frac{\pi}{2}|b|} \textrm{ as } |b| \to \infty,
\end{equation}
with $C=C(a)>0$, simple algebra entails that  for any  $z =a+ib \in \mathbb{S}_{(0,\alpha \delta)}$ there exists $C(a,\alpha)>0$ such that
\begin{equation}\label{eq:Tbound}
\left|\E_{-x}\left[\T_0^{z}\right]\right|\leq C(a,\alpha) \E_{-x}\left[\Tm_0^{\frac{a}{\alpha}}\right] |b|^{\frac{\alpha-1}{\alpha} a}e^{-\frac{1-\alpha}{\alpha}\frac{\pi}{2}|b|} \textrm{ as } |b| \to \infty,
\end{equation}
where we used the upper bound \eqref{eq:bm}. Hence, \eqref{eq:hto}  combined with \eqref{eq:Tbound} justifies that one can use Mellin inversion formula, see  \cite[Section 1.7.4]{Patie-Savov-16},  to get, for any $0<a <\alpha \delta$, the following Mellin-Barnes representation of the density
 \begin{equation*}%
  	f_{\T_0}(t)=\frac{1}{2\pi i}\int^{a+i\infty}_{a-i\infty}t^{-z}\frac{\Gamma(1-\frac{z}{\alpha})}{\Gamma(1-z)}\E_{-x}\left[\Tm_0^{\frac{z}{\alpha}}\right]dz,
  	 	\end{equation*}
  where the integral is absolutely convergent for any $t>0$. Using this representation and the bound \eqref{eq:Tbound} combined with a
  dominated convergence argument yields that $f_{\T_0}$ admits an analytical extension to the sector $\C_{(\frac{1-\alpha}{\alpha}\frac{\pi}{2})}=\{z\in\C;\,|\arg z|<\frac{1-\alpha}{\alpha}\frac{\pi}{2}\}$.  Since from \eqref{eq:Tbound} again, we get that, for any $n \in \mathbb{N}$, the mapping $z \mapsto (z+n)^n \E_{-x}\left[\T_0^{z}\right]$ is absolutely integrable and uniformly decaying along the complex lines of the strip $\mathbb{S}_{(0,\alpha \delta)}$ and hence for any $0<a <\alpha \delta$  and $t>0$,
 \begin{equation}\label{eq:MITd2}
  	f_{\T_0}^{(n)}(t)=\frac{(-1)^n}{2\pi i}\int^{a+i\infty}_{a-i\infty}t^{-z-n}\frac{\Gamma(z+n)}{\Gamma(z)}\frac{\Gamma(1-\frac{z}{\alpha})}{\Gamma(1-z)}\E_{-x}\left[\Tm_0^{\frac{z}{\alpha}}\right]dz.
  	 	\end{equation}
  Moreover, a dominated convergence argument gives that  for all $n \in \mathbb{N}$, $f^{(n)}_{\T_0} \in C_0(\R^+)$.   Next, assuming that %
    $\E_{-x}\left[\Tm_0^{1+\delta}\right]<\infty$ for some $\delta>0$. As above this yields that now $z\mapsto \E_{-x}\left[\Tm_0^{\frac{z}{\alpha}}\right]$  is holomorphic in the strip $ \mathbb{S}_{(0,\alpha (1+\delta))}= \left\{z\in \C;\: 0<\Re(z)<\alpha (1+\delta)\right \}$ and thus the representation \eqref{eq:MITd2} also holds for $f_{\T_0}^{(n)}$ for any $n \in \mathbb{N}$ and any $0<a <\alpha =\min(\alpha,\alpha(\delta+1))$.  By shifting the contour to the imaginary line $\Re(z)=a_{\alpha}$ where $\alpha < a_{\alpha} < \alpha (\delta+1)$,  one gets by an application of the Cauchy's residue Theorem, that
 \begin{equation*}%
  	f_{\T_0}^{(n)}(t)=  \frac{(-1)^{n}\Gamma(\alpha+n)}{\Gamma(\alpha)\Gamma(1-\alpha)}\E_{-x}\left[\Tm_0\right]t^{-\alpha -n }+\frac{(-1)^n}{2\pi i}\int^{a_{\alpha}+i\infty}_{a_{\alpha}-i\infty}t^{-z-n}\frac{\Gamma(z+n)}{\Gamma(z)}\frac{\Gamma(1-\frac{z}{\alpha})}{\Gamma(1-z)}\E_{-x}\left[\Tm_0^{\frac{z}{\alpha}}\right]dz.
  	 	\end{equation*}
  Next, since plainly $z \mapsto H_n(z)= \frac{\Gamma(z+n)}{\Gamma(z)}\frac{\Gamma(1-\frac{z}{\alpha})}{\Gamma(1-z)}\E_{-x}\left[\Tm_0^{\frac{z}{\alpha}}\right]$ is absolutely integrable on $\Re(z)=a_{\alpha}$, the Riemann-Lebesgue lemma and the fact that $a_{\alpha}>\alpha$ yield that
 \begin{eqnarray*}
 \lim_{t\to\infty}t^{\alpha + n }\int^{a_{\alpha}+i\infty}_{a_{\alpha}-i\infty}t^{-z-n} H_n(z)dz &=&  \lim_{t\to\infty}t^{\alpha-a_{\alpha} } \int^{\infty}_{-\infty}e^{ib \ln t} H_n(a_{\alpha}+ib)db =0.
  	 	\end{eqnarray*}
  Then, by means of the Euler reflection formula for the gamma function, we obtain  the asymptotic \eqref{eq:MITda} and we complete the proof by integrating the case $n=0$ in \eqref{eq:MITda}.

\bibliographystyle{plain}

\end{document}